\documentclass[11pt,reqno]{amsart}
\usepackage{amsmath}
\usepackage{etoolbox}
\usepackage{amscd, amsfonts, amssymb, graphicx, color}
\usepackage{url}
\usepackage{tikz-cd}
\usepackage{hyphenat}
\usepackage{mathtools}
\usepackage[colorlinks=true,pagebackref=true]{hyperref}
\usepackage{titlesec}
\usepackage{array}
\usepackage{caption}
\usepackage{geometry}
\geometry{margin=1in}
\titleformat{\section}
{\normalfont\bfseries\large\centering}{\thesection}{1em}{}
\makeatletter
\patchcmd\maketitle
{\uppercasenonmath\shorttitle}
{}
{}{}
\patchcmd\maketitle
{\@nx\MakeUppercase{\the\toks@}}
{\the\toks@}
{}
{}{}
\patchcmd\@settitle{\uppercasenonmath\@title}{\Large}{}{}
\patchcmd\@setauthors
{\MakeUppercase{\authors}}
{\authors}
{}{}
\makeatother
\textheight 22.5truecm \textwidth 14.5truecm
\setlength{\oddsidemargin}{0.35in}\setlength{\evensidemargin}{0.35in}
\setlength{\topmargin}{-.5cm}
\hyphenation{he-lio-trope opos-sum}
\setcounter{MaxMatrixCols}{10}

\newtheorem{theorem}{Theorem}[section]

\newtheorem{lemma}{Lemma}[section]

\newtheorem{example}{Example}[section]

\newtheorem{Proof of Theorem}{Proof}
\newtheorem{proposition}[theorem]{Proposition}

\hypersetup{urlcolor=blue, citecolor=red, linkcolor= blue}

\newcommand{\norm}[1]{\left\lVert#1\right\rVert}

\makeatletter
\renewcommand\subsection{\@startsection{subsection}{2}%
	\z@{.7\linespacing\@plus\linespacing}{.5\linespacing}%
	{\normalfont\bfseries}}
\makeatother
	\begin{document}
		\keywords{Rhaly operators, Generalised Ces{\`a}ro operators,  spectrum, weighted sequence spaces.}
		\subjclass[2020]{ 47A25, 47A10, 46B45, 46B50.}
		\title[Spectral Properties of the Compact Rhaly and Compact Generalised Ces{\`a}ro Operators]
		{{Spectral Properties of the Compact Rhaly and Compact Generalised Ces{\`a}ro Operators on Weighted $c_0$ Spaces}}
		
		\author[J. Rani, A. Patra  ] { {\large Jyoti Rani}$^{1}$, {\large Arnab Patra }$^{2}$}
		\address{$^{[1]}$ Department of Mathematics, Indian Institute of Technology Bhilai, Bhilai 491002, India}
		\email{\url{jyotir@iitbhilai.ac.in}}
		
		\address{$^{[2]}$ Department of mathematics, Indian Institute of Technology Bhilai , Bhilai 491002, India.
		}
		\email{\url{arnabp@iitbhilai.ac.in}}
		\date{\today}
		\maketitle		
\section{Abstract}
			In this article, we conduct a comprehensive study on the continuity, compactness, and spectral properties of Rhaly operators and generalized Cesàro operators, acting on weighted null sequence spaces. We determine the point spectrum, continuous spectrum, and residual spectrum for compact Rhaly operators and compact generalized Cesàro operators. Additionally, we explore Goldberg's classifications of Rhaly operators over weighted null sequence spaces.

		%
		%
		%
		\section{Introduction}
		
		Let $a=\{a_n\}$ be a sequence of real or complex numbers. The lower triangular matrix $R_a$ was introduced by Rhaly \cite{rhaly} and is defined as
		\[
		R_a=
		\begin{pmatrix}
			a_1 & 0 & 0 & 0 & 0 & \cdots \vspace{1.5mm}\\
			a_2 & a_2 & 0 & 0 & 0 & \cdots\vspace{1.5mm}\\
			a_3 & a_3 & a_3 & 0 & 0 & \cdots\vspace{1.5mm}\\
			a_4 & a_4 & a_4 & a_4 & 0 & \cdots\vspace{1.5mm}\\
			\vdots & \ddots & \ddots&\ddots & \ddots & \ddots
		\end{pmatrix}
		.\]
		The Rhaly operator is the operator that $R_a$ represents. The Ces{\'a}ro operator $C$ can be obtained by choosing $a_n = \frac{1}{n}$, $n \in \mathbb{N}$. Many reserachers have examined the spectrum of Rhaly operators defined over various classical sequence spaces, such as $c_0$ \cite{rhaly_c0_1,rhaly_c0_3}, $\ell_p$ \cite{rhaly_lp}, $bv_0$ \cite{rhaly_bv0}, etc. In this study, we investigated the spectrum characteristics of comapct Rhaly operators on weighted null sequence spaces. The spectral characteristics of discrete generalized Ces{\`a}ro operators defined over weighted null sequence spaces have been explored sequentially. 
		
		In 1982, Rhaly \cite{rhaly1982discrete} introduced the discrete generalized Cesàro operator $C_t$. The discrete generalized Ces{\`a}ro operator $C_t$ is defined as
		
		\[
		C_t=
		\begin{pmatrix}
			1 & 0 & 0 & 0 & \cdots \vspace{1.5mm}\\
			\frac{t}{2} & \frac{1}{2} & 0 & 0 &\cdots\vspace{1.5mm}\\
			\frac{t^2}{3} & \frac{t}{3} & \frac{1}{3} & 0 & \cdots\vspace{1.5mm}\\
			\vdots & \vdots & \vdots&\vdots & \ddots
		\end{pmatrix}
		.\]
		Numerous studies have been conducted in the literature about the spectral properties of the discrete generalized Cesàro operator $C_t$. Readers are directed to \cite{albanese2023spectral, curbera2022fine, albanese2024spectra} for a more thorough analysis.
		Take $t=1$ to obtain the Cesàro operator $C$. The spectral characteristics of the Cesàro operator have been examined by numerous researchers. We direct the reader to \cite{cesaro3,qcesaro3, cesaro_c0_c} for contributions regarding spectral characteristics of Cesàro operators.

The remaining sections of this work are organized as follows. Section 2 includes several notations and relevant results. Section 3 discusses a sufficient condition for the Rhaly operator's compactness, as well as the spectral properties of compact Rhaly operators defined over weighted $c_0$ space. In Section 4, we present some findings on Goldberg's classifications of the spectrum of the Rhaly operator defined over weighted $c_0$ spaces. Section 5 contains boundedness and compactness criteria for Ces{\'a}ro operators, as well as spectral subdivisions of generalized Ces{\'a}ro operators on weighted $c_0$ spaces.
		
		\section{Preliminaries and Notations}
		The natural number set $\mathbb{N}$ is the index for infinite sequences and matrices that are the subject of this article. Let $X$ and $Y$ be complex Banach spaces. The space of all bounded linear operators from $X$ to $Y$ is represented by the set $\mathcal{B}(X,Y)$, while the ideal of all compact operators from $X$ to $Y$ is represented by the set $\mathcal{K}(X,Y)$. $\mathcal{B}(X)$ and $\mathcal{K}(X)$ represent $\mathcal{B}(X,Y)$ and $\mathcal{K}(X,Y)$, respectively, if $X = Y$. The operator norm for any $T \in \mathcal{B}(X)$ is $\|T\|$, and the supremum norm on the sequence space is $\|.\|_{\infty}$. Let $T: X \rightarrow Y$ be a bounded linear operator. Here, $R(T)$ and $N(T)$ represent the range and null spaces of $T$, respectively. The adjoint operator $T^*: Y^* \rightarrow X^*$ of $T$, is a bounded linear operator, defined as
		\[
		(T^* f)(x) = f(Tx) \quad \text{for all } f \in Y^* \text{ and } x \in X,
		\]
		where $X^*$ and $Y^*$ are the dual spaces of $X$ and $Y$ respectively. 
		
		Consider a complex Banach space $X \neq \{0\}$, and $T \in \mathcal{B}(X)$. The set of all complex integers $\lambda$ for which the operator $T - \lambda I$ is invertible is known as the resolvent set of $T$, where $I$ is the identity operator on $X$, and it is denoted by $\rho(T, X)$. Thus,
		
		\begin{equation*}
			\rho(T, X)= \{\lambda \in \mathbb{C} : N(T - \lambda I) = \{0\} \text{ and } R(T - \lambda I) = X\}. 
		\end{equation*}
		
		The complement of the resolvent set in the complex plane \(\mathbb{C}\) is called the spectrum of $T$, and it is denoted by \(\sigma(T, X)\). Therefore,
		
		\begin{equation*}
			\sigma(T, X)  
			=\{\lambda \in \mathbb{C} : N(T - \lambda I) \neq \{0\} \text{ or } R(T - \lambda I) \neq X\}.
		\end{equation*}

		The spectrum of an operator \(T\) can be divided into three classes based on the sets \(N(T - \lambda I)\) and \(R(T - \lambda I)\). If a point \(\lambda \in \mathbb{C}\) is in the spectrum, then there are two cases: (i) \(N(T - \lambda I) \neq \{0\}\), (ii) \(N(T - \lambda I) = \{0\}\) but \(R(T - \lambda I) \neq X\).
		For the case where \(N(T - \lambda I) = \{0\}\) but \(R(T - \lambda I) \neq X\), there are further two possible outcomes for $R(T - \lambda I)$ which are \(\overline{R(T - \lambda I)} \neq X\) or \( \overline{R(T - \lambda I)} = X\). Based on this, we can define the following three sets:
		
		\begin{itemize}
			\item [(i)]The \textit{point spectrum} of \(T\) is the set of all \(\lambda \in \mathbb{C}\) for which \(N(T - \lambda I) \neq \{0\}\), denoted by \(\sigma_p(T, X)\), 
			\item [(ii)] 
			The \textit{continuous spectrum} of \(T\) is the set of all \(\lambda \in \mathbb{C}\) for which \(N(T - \lambda I) = \{0\}\) and $\overline{(R(T - \lambda I)} = X$ but $R(T - \lambda I) \ne X$, denoted by \(\sigma_p(T, X)\),
			
			\item [(iii)]
			The \textit{residual spectrum} of \(T\) is the set of all \(\lambda \in \mathbb{C}\) for which \(N(T - \lambda I) = \{0\}\) and \( \overline{R(T - \lambda I)} \neq X\), denoted by \(\sigma_p(T, X)\).
		\end{itemize}

		The sets \(\sigma_p(T, X), \sigma_r(T, X), \sigma_c(T, X)\) are disjoint and forms a partition of \(\sigma(T, X)\). The determination of point spectrum, continuous spectrum and residual spectrum of an operator is
		called the fine spectra.
		For $T \in \mathcal{B}(X)$, we call a sequence $\{x_k\} \in X$ a Weyl sequence for $T$ if $\left\|x_k\right\|=1$ and $\left\|T x_k\right\| \rightarrow 0$ as $k \rightarrow \infty$.
		The approximate point spectrum of $T$ is defined as
		$$\sigma_{a p}(T):=\{\lambda \in \mathbb{K} : \text{there exists a Weyl sequence for}~ T-\lambda I\}.$$
		Moreover, defect spectrum $\sigma_\delta(T)$ and compression spectrum $\sigma_{c o}(T)$ of $T$ are defined as 
		$$
		\sigma_\delta(T):=\{\lambda \in \mathbb{K}: T-\lambda I \text { is not surjective }\}
		$$
		$$
		\sigma_{c o}(T):=\{\lambda \in \mathbb{K}: \overline{R(T-\lambda I)} \neq X\}.
		$$
		There are some other subdivisions of $\sigma(T)$ (not necessarily disjoint) as
		\begin{eqnarray*}
			\sigma(T)=\sigma_{a p}(T) \cup \sigma_\delta(T)  \\
			\sigma(T)=\sigma_{a p}(T) \cup \sigma_{c o}(T).
		\end{eqnarray*}
		
		The space of all complex sequences is represented by $\mathbb{C}^\mathbb{N}$, and $r = \{r_k\}$ is an infinite positive real sequence. With the norm $\|x\|_r=\sup_k |x_k|r_k$, the weighted null sequence space $c_0(r)$ is defined as follows: $c_0(r)=\{ \{x_k\}\in \mathbb{C}^{\mathbb{N}}: \lim_{k\to\infty} r_kx_k=0\}$. Under this norm, $(c_0(r),\|x\|_r)$ is a Banach space. The dual of $c_0(r)$ is linearly isometric to the weighted sequence space $\ell_1(r^{-1})$, where $r^{-1} = \{\frac{1}{r_n}\}.$ Furthermore, it should be noted that $c_0(r) = c_0$ and the norms are the same if $\inf_k r_k > 0$. We are therefore interested in the situation where $\inf_k r_k = 0.$.

		Some inclusion relations of the spectrum of a bounded linear operator and its adjoint operator are mentioned in the subsequent proposition..
	
			From  Goldberg's classifications \cite{goldberg_unbounded}, if  $X$ is a Banach space and $T \in \mathcal{B}(X)$, then there are three possibilities for each $R(T-\lambda I)$ and $(T-\lambda I)^{-1}$ which are
			\begin{enumerate}
				\item [(I)] $R(T-\lambda I)=X$,
				\item [(II)] $R(T-\lambda I)\ne \overline{R(T-\lambda I)}=X$,
				\item[(III)] $\overline{R(T-\lambda I)}\ne X$,
			\end{enumerate} 
			and
			\begin{enumerate}
				\item[(1)] $(T-\lambda I)^{-1}$ exists and its continuous,
				\item[(2)]  $(T-\lambda I)^{-1}$ exists and its not continuous,
				\item[(3)] $(T-\lambda I)^{-1}$ does not exist.
			\end{enumerate}  
			If these possibilities are combined, then nine different states are created. These are labeled by $I_1$, $I_2$, $I_3$, $II_1$, $II_2$, $II_3$, $III_1$, $III_2$, $III_3$. For example, if $T\in II_2$ then $R(T-\lambda I)\ne \overline{R(T-\lambda I)}=X$ and  $(T-\lambda I)^{-1}$ exists and its not bounded.

			The following lemma is useful in this sequel.
			\begin{lemma} \cite{wilansky_sequence} \label{bounded_c0}
				The matrix $A = (a_{nk})(n,k=1,2,3,...)$ gives rise to a bounded linear operator $T \in \mathcal{B}(c_0)$ if and only if the following conditions hold,
				\begin{enumerate}
					\item[(i)] the rows of $A$ are in $\ell_1$ and their $\ell_1$ norms are bounded,
					\item[(ii)] the columns of $A$ are in $c_0$.
				\end{enumerate}
				The operator norm of $T$ is given by the supremum of the $\ell_1$ norms of the rows.
			\end{lemma}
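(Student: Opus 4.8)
The plan is to establish the two implications of the biconditional separately and pin down the norm identity along the way. Throughout I write $(Tx)_n = \sum_{k} a_{nk} x_k$ for the candidate action of the matrix on a sequence $x=\{x_k\}$, and I set $M := \sup_n \sum_k |a_{nk}|$ for the quantity proposed as the operator norm.

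For sufficiency, assume (i) and (ii). First I would check that $Tx$ is well defined and bounded for each $x \in c_0$: since $x \in \ell_\infty$ and the $n$-th row lies in $\ell_1$, the series $\sum_k a_{nk} x_k$ converges absolutely and satisfies $|(Tx)_n| \le \|x\|_\infty \sum_k |a_{nk}| \le M \|x\|_\infty$, so that $\sup_n |(Tx)_n| \le M \|x\|_\infty$. The delicate point is to show that $Tx$ actually lies in $c_0$ rather than merely in $\ell_\infty$, and this is where condition (ii) enters. Given $\varepsilon > 0$, I would use $x_k \to 0$ to fix $K$ with $|x_k| < \varepsilon$ for $k > K$, and split $(Tx)_n$ into the finite head $\sum_{k=1}^{K} a_{nk} x_k$ and the tail $\sum_{k>K} a_{nk} x_k$. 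The tail is bounded by $\varepsilon M$ uniformly in $n$ using (i); the head is a fixed finite linear combination of the columns of $A$, each of which tends to $0$ in $n$ by (ii), hence the head tends to $0$ as $n \to \infty$. Letting $n$ be large and then $\varepsilon \to 0$ gives $(Tx)_n \to 0$, so $T \in \mathcal{B}(c_0)$ with $\|T\| \le M$. I expect this splitting argument to be the main obstacle, since it is the only place where the two hypotheses must be combined and where the uniform tail estimate is essential.

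For necessity, assume $T \in \mathcal{B}(c_0)$. Condition (ii) is immediate: applying $T$ to the standard basis vector $e_k \in c_0$ yields the $k$-th column of $A$, which must lie in $c_0$. For condition (i) together with the lower bound on the norm, I would fix a row index $n$ and test $T$ against the finitely supported unimodular sequences $x^{(m)}$ defined by $x^{(m)}_k = \overline{a_{nk}}/|a_{nk}|$ for $1 \le k \le m$ with $a_{nk} \neq 0$ and $x^{(m)}_k = 0$ otherwise. Each $x^{(m)}$ belongs to $c_0$ with $\|x^{(m)}\|_\infty \le 1$, and a direct computation gives $(Tx^{(m)})_n = \sum_{k=1}^{m} |a_{nk}|$, whence $\sum_{k=1}^{m} |a_{nk}| \le \|Tx^{(m)}\|_\infty \le \|T\|$. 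Letting $m \to \infty$ shows each row is in $\ell_1$ with $\ell_1$-norm at most $\|T\|$, so $M \le \|T\|$. Combining this with the estimate $\|T\| \le M$ obtained in the sufficiency direction yields $\|T\| = M = \sup_n \sum_k |a_{nk}|$, which simultaneously completes the characterization and establishes the norm formula.
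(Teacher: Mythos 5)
Your proof is correct and complete: the head/tail splitting in the sufficiency direction correctly combines the uniform $\ell_1$ row bound with the column condition, and the necessity direction via standard basis vectors and truncated unimodular test sequences gives both conditions and the matching lower bound $\sup_n \sum_k |a_{nk}| \le \|T\|$, yielding the norm formula. Note that the paper itself offers no proof of this lemma---it is quoted from Wilansky's book \cite{wilansky_sequence}---so there is nothing in the paper to compare against; your argument is the standard textbook proof of this classical characterization of matrix operators on $c_0$.
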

			
			\section{Spectrum of Compact Rhaly Operators}
			In the following theorem, a sufficient condition for the boundedness and compactness of $ R_a : c_0(s) \to c_0(s)$ is given when the weight vectors for both the domain and range spaces are the same.
			
			\begin{theorem}\label{cor11}
				If $a=\{a_n\}$ is a sequence of real or complex numbers and $s=\{s_n\}$ is a sequence of strictly positive real numbers such that $0 \leq \limsup_{n \rightarrow \infty} \frac{s_{n+1}}{s_n}<1$ then $R_a$ is a bounded linear operator on $c_0(s)$ if $\{a_n\}$ is a bounded sequence and $R_a$ is a compact operator on $c_0(s)$ if $\{a_n\}$ is a null sequence.
			\end{theorem}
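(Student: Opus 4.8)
The plan is to transfer the problem from the weighted space $c_0(s)$ to the unweighted space $c_0$ via the natural isometric isomorphism $\phi\colon c_0(s)\to c_0$ given by $(\phi x)_n = s_n x_n$, and then to apply Lemma \ref{bounded_c0} together with a finite-rank approximation argument. Under this conjugation, $R_a$ on $c_0(s)$ is bounded (respectively compact) if and only if $B := \phi R_a \phi^{-1}$ is bounded (respectively compact) on $c_0$. A direct computation shows that $B$ is the lower triangular matrix with entries $b_{nk} = a_n s_n / s_k$ for $1 \le k \le n$ and $b_{nk}=0$ otherwise. Thus the $\ell_1$ norm of the $n$-th row of $B$ equals $|a_n|\sum_{k=1}^n s_n/s_k$, and the whole task reduces to controlling the quantity $\sum_{k=1}^n s_n/s_k$.

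The heart of the argument, and the step I expect to be the main obstacle, is to prove that $\sup_n \sum_{k=1}^n s_n/s_k < \infty$; this is exactly where the hypothesis $\limsup_n s_{n+1}/s_n < 1$ is used. Fixing $\rho$ with $\limsup_n s_{n+1}/s_n < \rho < 1$, there is an $N$ with $s_{m+1}/s_m \le \rho$ for all $m \ge N$, so that $s_n/s_k = \prod_{m=k}^{n-1}(s_{m+1}/s_m) \le \rho^{n-k}$ whenever $n > k \ge N$. Splitting $\sum_{k=1}^n s_n/s_k$ into the tail $k \ge N$ and the finitely many initial terms $k < N$, the tail is bounded by $\sum_{j\ge 0}\rho^j = 1/(1-\rho)$, while the initial block equals $(s_n/s_N)\sum_{k<N}(s_N/s_k) \le \rho^{\,n-N} C$ with $C = \sum_{k<N} s_N/s_k$, which is itself bounded. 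Hence $\sum_{k=1}^n s_n/s_k \le M$ for some finite $M$ independent of $n$.

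With this uniform bound in hand, boundedness follows quickly. If $\{a_n\}$ is bounded by $A$, then every row $\ell_1$ norm of $B$ is at most $AM$, verifying condition (i) of Lemma \ref{bounded_c0}; condition (ii) holds because each column entry $b_{nk} = a_n s_n/s_k$ tends to $0$ as $n\to\infty$, since $s_n\to 0$ (a consequence of the geometric decay) while $\{a_n\}$ stays bounded. Therefore $B \in \mathcal{B}(c_0)$ and, transporting back through $\phi$, $R_a \in \mathcal{B}(c_0(s))$.

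For compactness under the stronger hypothesis that $\{a_n\}$ is null, I would approximate $B$ by the finite-rank operators $B_N$ obtained by retaining only the first $N$ rows of $B$ and zeroing the rest. Each $B_N$ has range contained in $\mathrm{span}\{e_1,\dots,e_N\}$ and is therefore compact. Applying the norm formula of Lemma \ref{bounded_c0} to the residual matrix $B - B_N$ gives $\|B - B_N\| = \sup_{n > N}\,|a_n|\sum_{k=1}^n s_n/s_k \le M\,\sup_{n > N}|a_n|$, which tends to $0$ as $N\to\infty$ precisely because $\{a_n\}$ is a null sequence. Thus $B$ is an operator-norm limit of finite-rank operators, hence compact, and $R_a = \phi^{-1} B \phi$ is compact on $c_0(s)$.
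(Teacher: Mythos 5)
Your proof is correct, and its central estimate is the same as the paper's: both exploit $\limsup_{n}\, s_{n+1}/s_n<1$ to fix $\rho\in(0,1)$ and an index $N$ with $s_{m+1}\le \rho s_m$ for $m\ge N$, then bound the weighted row sums (your $\sum_{k=1}^{n} s_n/s_k$, the paper's $s_n\sum_{k=1}^{n}1/s_k$ --- the same quantity) uniformly in $n$ by splitting off the finitely many indices $k<N$ and summing a geometric series over $k\ge N$. Where you genuinely diverge is in how the conclusion is drawn from this estimate: the paper stops at the estimate and invokes Theorems 3.1 and 3.2 of \cite{patra2023spectral}, which assert that $R_a$ is bounded (resp.\ compact) on $c_0(s)$ whenever $\bigl\{a_n s_n\sum_{k=1}^{n}1/s_k\bigr\}$ is bounded (resp.\ null), whereas you re-derive these criteria from scratch: you conjugate by the diagonal isometry $\phi:c_0(s)\to c_0$, identify $B=\phi R_a\phi^{-1}$ as the lower triangular matrix with entries $a_n s_n/s_k$, get boundedness from Lemma \ref{bounded_c0} (checking the column condition via $s_n\to 0$), and get compactness by writing $B$ as an operator-norm limit of the row truncations $B_N$, using the norm formula of Lemma \ref{bounded_c0} to see $\|B-B_N\|\le M\sup_{n>N}|a_n|\to 0$. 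What your route buys is self-containedness: every step is verified with tools stated in the paper itself, in effect inlining the proofs of the external results being cited; what the paper's route buys is brevity, since those criteria are already established in \cite{patra2023spectral}. Both arguments are sound, and the finite-rank approximation step you supply is exactly the standard mechanism one would expect behind the cited compactness criterion.
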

			
			\begin{proof}
				Since $0 \leq \limsup_{n \rightarrow \infty} \frac{s_{n+1}}{s_n}<1$, there exists $0<\eta<1$ and $n_0 \in \mathbb{N}$ such that
				\[s_{n+1} \le \eta s_n \ \ \ \forall \ \ n \geq n_0.\]
				Using the aforementioned relation and for fixed $n > n_0$, we have
				\begin{eqnarray*}
					&&s_{n} \leq \eta^{n-k} s_k \ \ \ \forall \ \ n_0 \leq k \leq n,\\
					&& \frac{1}{s_k} \leq \frac{\eta^{n-k}}{s_n} \ \ \ \forall \ \ n_0 \leq k \leq n.
				\end{eqnarray*}
				Thus we have,
				\begin{equation} \label{4.2.2a}
					\sum\limits_{k=n_0}^{n}\frac{1}{s_k} \leq \sum\limits_{k=n_0}^{n}\frac{\eta^{n-k}}{s_n}\leq \frac{1}{(1-\eta)s_n}.
				\end{equation}
				Take $M_0 = \sum\limits_{k=1}^{n_0-1}\frac{1}{s_k}.$ Then
				\begin{equation*}
					\sum\limits_{k=1}^{n}\frac{1}{s_k} = M_0 + \sum\limits_{k=n_0}^{n}\frac{1}{s_k}.
				\end{equation*}
				From \eqref{4.2.2a}, we obtained that
				\begin{eqnarray*}
					s_n |a_n| \sum\limits_{k=1}^{n}\frac{1}{s_k} &\leq& |a_n| \left( M_0 \norm{s}_{\infty} +  \frac{1}{(1-\eta)}\right).
				\end{eqnarray*}
				The results follow from the above relation, Theorem 3.1 \cite{patra2023spectral} and Theorem 3.2 \cite{patra2023spectral}.
			\end{proof}
			
			Theorem \ref{cor11} provides good source of weights $\{s_n\}.$ The following example implies that Theorem \ref{cor11} is merely sufficient condition.
			
			\begin{example}	
				Let $a_n = \frac{1}{n^p}, \ p>0.$ Then $R_a$ becomes the $p$-Ces\`aro operator defined by Rhaly \cite{rhaly} and for $p=1$ it becomes the Ces\`aro operator $C_1.$ It is straightforward to demonstrate that for  $0<p< 1$ the $\ell_1$ norms of the rows of $R_a$ are not bounded and Lemma \ref{bounded_c0} implies that $R_a \notin \mathcal{B}(c_0).$ Take $s_n = e^{-n^{\frac{1}{\alpha}}}$ for $n \in \mathbb{N}$ where $p > 1$ and $\alpha \in \mathbb{N}\setminus \{1\}.$ Now
				\[\lim\limits_{n \rightarrow \infty} \frac{s_{n+1}}{s_n} =\lim\limits_{n \rightarrow \infty} \frac{e^{-(n+1)^{\frac{1}{\alpha}}}}{e^{-n^{\frac{1}{\alpha}}}} = \lim\limits_{n \rightarrow \infty} e^{\left( n^{\frac{1}{\alpha}} - (n+1)^{\frac{1}{\alpha}}\right)} =  1.\]
				Hence, Theorem \ref{cor11} fails to give any conclusion. However
				\[\sum\limits_{k=1}^{n} \frac{1}{s_k} =\sum\limits_{k=1}^{n} e^{k^{\frac{1}{\alpha}}} \leq \int\limits_{1}^{n+1}  e^{x^{\frac{1}{\alpha}}} dx = \int\limits_{1}^{(n+1)^{\frac{1}{\alpha}}}  t^{\alpha - 1} e^t dt,\]
				where
				\begin{eqnarray*}
					\int\limits_{1}^{(n+1)^{\frac{1}{\alpha}}}  t^{\alpha - 1} e^t dt &=& \left[ e^t t^{\alpha -1} +  e^t \sum\limits_{j=1}^{\alpha - 1} (-1)^{j} t^{\alpha-(j+1)} \prod\limits_{l=1}^{j}(\alpha - l) \right]_1^{(n+1)^\frac{1}{\alpha}}\\
					&=& e^{(n+1)^{\frac{1}{\alpha}}} \left[ (n+1)^\frac{\alpha -1}{\alpha} + \mathcal{O}((n+1)^\frac{\alpha -2}{\alpha}) \right],
				\end{eqnarray*}
				and hence, that
				\[a_ns_n\sum\limits_{k=1}^{n} \frac{1}{s_k} \leq \frac{e^{(n+1)^{\frac{1}{\alpha}}} \left[ (n+1)^\frac{\alpha -1}{\alpha} + \mathcal{O}((n+1)^\frac{\alpha -2}{\alpha}) \right]}{e^{n^{\frac{1}{\alpha}}} n^p}.\]
				
				The above relation implies that $\lim_{n \rightarrow \infty} a_n s_n \sum\limits_{k=1}^{n} \frac{1}{s_k} = 0$, by Theorem 3.2 \cite{patra2023spectral}, we have $R_a \in \mathcal{K}(c_0(s))$ . Consequently, it follows that $R_a \in \mathcal{B}(c_0(s))$.
				
			\end{example}

			Next, we analyze the spectrum and fine spectra of the compact Rhaly operator $R_a: c_0(s) \to c_0(s)$.
			From now on, we assume that $a = \{a_n\}$ is a sequence of non-zero real numbers and $\{s_n\}$ is a bounded decreasing sequence of strictly positive real numbers.
			Consider the set $S = \{a_n: n \in \mathbb{N}\}.$ The following theorem describes the spectrum and fine spectrum of the compact Rhaly operator $R_a$ in the weighted sequence space $c_0(s)$.
			\begin{theorem}\label{th_spectra}
				The compact Rhaly operator $R_a:c_0(s) \to c_0(s)$ satisfies the following relations.
				\begin{enumerate}
					\item[(i)] $\sigma_p(R_a, c_0(s)) = S,$
					\item[(ii)] $\sigma(R_a, c_0(s)) = S \cup \{0\},$
					\item[(iii)] $\sigma_c(R_a, c_0(s)) = \{0\},$
					\item[(iv)] $\sigma_r(R_a, c_0(s))=\phi$. 
				\end{enumerate}
			\end{theorem}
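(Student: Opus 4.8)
The plan is to exploit the Riesz--Schauder theory for the compact operator $R_a$, which reduces every claim to two concrete computations: solving the eigenvalue equation for $R_a$ itself, and solving it for its adjoint $R_a^*$ on the dual $c_0(s)^*\cong\ell_1(s^{-1})$. Throughout I write $y_n=\sum_{k=1}^n x_k$ (with $y_0=0$), so that the eigenvalue equation $R_a x=\lambda x$, i.e. $a_n y_n=\lambda x_n=\lambda(y_n-y_{n-1})$, becomes the first-order recursion $(\lambda-a_n)y_n=\lambda y_{n-1}$.

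First I would pin down the point spectrum. For the inclusion $\sigma_p(R_a,c_0(s))\subseteq S$, observe that if $\lambda=0$ then $a_n y_n=0$ with $a_n\neq 0$ forces $y\equiv 0$, hence $x=0$; and if $\lambda\neq 0$ but $\lambda\notin S$, then $\lambda-a_n\neq 0$ for every $n$, so the recursion together with $y_0=0$ gives $y_n=0$ for all $n$, and again $x=0$. Thus no $\lambda\notin S$ is an eigenvalue, and in particular $0\notin\sigma_p$. For the reverse inclusion $S\subseteq\sigma_p(R_a,c_0(s))$, rather than constructing an eigenvector of $R_a$ in $c_0(s)$ directly (whose membership in the space is awkward to verify), I would pass to the adjoint. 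Using $c_0(s)^*\cong\ell_1(s^{-1})$ with pairing $\langle f,x\rangle=\sum_n f_n x_n$, one computes $(R_a^* f)_k=\sum_{n\ge k}a_n f_n$. Fixing $\lambda=a_m$ and letting $m'=\min\{n:a_n=a_m\}$, the telescoped equation $R_a^* f=a_m f$ reads $f_{k+1}=(1-a_k/a_m)f_k$; solving it with $f_1=1$ produces a nonzero, finitely supported $f$ (supported on $\{1,\dots,m'\}$, since the factor vanishes at $k=m'$) that lies in $\ell_1(s^{-1})$. Hence $a_m\in\sigma_p(R_a^*)\subseteq\sigma(R_a^*)=\sigma(R_a)$, and because $R_a$ is compact and $a_m\neq 0$, the Riesz--Schauder theorem upgrades this to $a_m\in\sigma_p(R_a,c_0(s))$. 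This establishes (i).

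For (ii), I would again invoke Riesz--Schauder: every nonzero point of $\sigma(R_a)$ is an eigenvalue, so $\sigma(R_a)\setminus\{0\}=\sigma_p(R_a)\setminus\{0\}=S$ (note $0\notin S$ since each $a_n\neq 0$), while $0\in\sigma(R_a)$ because a compact operator on the infinite-dimensional space $c_0(s)$ cannot be invertible. Hence $\sigma(R_a,c_0(s))=S\cup\{0\}$. For (iii) and (iv), the disjoint partition $\sigma=\sigma_p\sqcup\sigma_c\sqcup\sigma_r$ together with $\sigma_p=S$ forces $\sigma_c\cup\sigma_r\subseteq\{0\}$, so everything comes down to classifying the single point $0$. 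Here the same transpose computation is decisive: $R_a^* f=0$ gives, after telescoping, $a_k f_k=0$ for all $k$, whence $f=0$; thus $R_a^*$ is injective and $R(R_a)$ is dense in $c_0(s)$. Since $R_a$ is compact it is not surjective, so $R(R_a)\neq c_0(s)=\overline{R(R_a)}$ while $N(R_a)=\{0\}$, placing $0$ in the continuous spectrum. Therefore $\sigma_c(R_a,c_0(s))=\{0\}$ and $\sigma_r(R_a,c_0(s))=\emptyset$.

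The main obstacle is the lower bound $S\subseteq\sigma_p$ in step (i): a direct eigenvector of $R_a$ for $\lambda=a_m$ has components growing like $\prod_{j>m}\frac{\lambda}{\lambda-a_j}$, and verifying that the weights $s_n$ decay fast enough to keep this sequence in $c_0(s)$ is delicate. The device that removes this difficulty is to produce an eigenvalue of the adjoint instead---where the analogous recursion terminates and yields a harmless finitely supported eigenfunctional---and then let compactness (Riesz--Schauder) transfer the eigenvalue back to $R_a$. Care is needed only in choosing the minimal index $m'$, so that the constructed functional is genuinely nonzero when the value $a_m$ is repeated among the earlier $a_k$.
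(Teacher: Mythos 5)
Your proof is correct, but the route you take to the key inclusion $S \subseteq \sigma_p(R_a, c_0(s))$ in part (i) is genuinely different from the paper's. The paper constructs the eigenvector of $R_a$ for $\lambda = a_m$ explicitly, $x_n = \bigl(\prod_{j=m+1}^{n} \frac{\lambda a_{j-1}^{-1}}{\lambda a_j^{-1}-1}\bigr) x_m$, and then confronts exactly the difficulty you flagged: proving $\{x_n\} \in c_0(s)$. It does so via Kummer's test, using the compactness criterion $\bigl\{a_n s_n \sum_{k=1}^n 1/s_k\bigr\} \in c_0$ and, crucially, the standing assumption that $\{s_n\}$ is decreasing, concluding $\{x_n s_n\} \in \ell_2$ and hence $\{x_n\} \in c_0(s)$. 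Your detour through the adjoint sidesteps all of this: the adjoint recursion terminates at the minimal index $m'$, giving a nonzero finitely supported eigenfunctional in $\ell_1(s^{-1})$, and Riesz--Schauder theory (every nonzero spectral value of a compact operator is an eigenvalue) transfers $a_m$ back into $\sigma_p(R_a, c_0(s))$. This is shorter, requires no convergence test, and never uses monotonicity of $\{s_n\}$, so it actually establishes the theorem under weaker hypotheses than the paper's; what it gives up is the explicit form (and the weighted $\ell_2$ decay) of the eigenvectors of $R_a$ itself, which the paper's argument yields as a by-product. Parts (ii)--(iv) of your proof essentially coincide with the paper's: Riesz--Schauder for (ii), adjoint injectivity plus the partition argument for (iii)--(iv), except that you rule out surjectivity of $R_a$ directly from compactness, where the paper argues via bounded-below/closed-range considerations. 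One small point you should make explicit: a finitely supported solution of the telescoped recursion $a_m f_{k+1} = (a_m - a_k) f_k$ does satisfy the original adjoint equation $\sum_{n \ge k} a_n f_n = a_m f_k$, because the difference $g_k = \sum_{n\ge k} a_n f_n - a_m f_k$ is constant in $k$ (by the recursion) and tends to $0$ (by finite support), hence vanishes identically; this one-line verification is needed to legitimize passing from the recursion back to the eigenvalue equation.
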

			
			\begin{proof}
				\allowdisplaybreaks
				\begin{enumerate}
					\item[(i)] Let us consider $R_ax=\lambda x$ for some $\lambda \in \mathbb{C}$ and non-zero $x\in c_0(s).$ We have
					\begin{equation} \label{eqn2}
						a_k\sum_{j=1}^kx_j = \lambda x_k ~ \text{for all}~k\in \mathbb{N}.
					\end{equation}
					If $\lambda \notin S$, then above equations imply that $x_k = 0$ for all $k \in \mathbb{N}$. Therefore, $\sigma_p(R_a, c_0(s)) \subseteq S.$

					\noindent Let $\lambda \in S$ and $m$ be the smallest integer such that $x_m \neq 0.$ Then, by solving the system of equations (\ref{eqn2}), we have $\lambda = a_m$ and
					\[x_n = \left( \prod\limits_{j=m+1}^{n} \frac{\lambda a_{j-1}^{-1}}{\lambda a_j^{-1} - 1} \right)x_m, \ n = m+1, m+2, \cdots.\]
					Now it is required to prove that $\{x_n\} \in c_0(s)$. Take $q_n = \frac{s_{n+1}}{s_n^2|a_n|^2\sum_{k=1}^{n}\frac{1}{s_k}}, \ n \in \mathbb{N}$. Now,
					\begin{eqnarray}\label{4.2.4}
						&&q_n \frac{\vert{x_ns_n}\vert^2}{\vert{x_{n+1}s_{n+1}}\vert^2} - q_{n+1} \nonumber \\
						&=& \frac{1}{\lambda^2 s_{n+1} a_{n+1}^2 \sum_{k=1}^{n}\frac{1}{s_k}}(\lambda - a_{n+1})^2 - \frac{1}{s_{n+1}^2a_{n+1}^2\sum_{k=1}^{n+1}\frac{1}{s_k}}s_{n+2} \nonumber \\
						&=&\frac{1}{\lambda^2 \left(s_{n+1}a_{n+1}\sum_{k=1}^{n+1}\frac{1}{s_k} \right)\left( s_{n+1}a_{n+1}\sum_{k=1}^{n}\frac{1}{s_k}  \right)} \lambda^2\left(s_{n+1}\sum_{k=1}^{n+1}\frac{1}{s_k}-s_{n+2}\sum_{k=1}^{n}\frac{1}{s_k}\right) \nonumber\\
						&- & \frac{1}{\lambda^2 \left(s_{n+1}a_{n+1}\sum_{k=1}^{n+1}\frac{1}{s_k} \right)\left( s_{n+1}a_{n+1}\sum_{k=1}^{n}\frac{1}{s_k}  \right)}\left(2 \lambda a_{n+1}s_{n+1}\sum_{k=1}^{n+1}\frac{1}{s_k}-a_{n+1}^2s_{n+1}\sum_{k=1}^{n+1}\frac{1}{s_k}\right).\nonumber\\
						&&
					\end{eqnarray}
					Since $\{s_n\}$ is decreasing sequence, we obtain
					\begin{equation} \label{4.2.5}
						s_{n+1}\sum_{k=1}^{n+1}\frac{1}{s_k}-s_{n+2}\sum_{k=1}^{n}\frac{1}{s_k} = 1+(s_{n+1} - s_{n+2})\sum_{k=1}^{n}\frac{1}{s_k} \geq 1, \forall n \in \mathbb{N}.
					\end{equation}
					Using equations \eqref{4.2.5} and \eqref{4.2.4}, we obtain that
					\begin{align}\label{ch4_kummer}
						&q_n \frac{\vert{x_ns_n}\vert^2}{\vert{x_{n+1}s_{n+1}}\vert^2} - q_{n+1} \nonumber\\
						\geq& \frac{\lambda^2}{\lambda^2 \left(s_{n+1}a_{n+1}\sum_{k=1}^{n+1}\frac{1}{s_k} \right)\left( s_{n+1}a_{n+1}\sum_{k=1}^{n+1}\frac{1}{s_k}-a_{n+1}  \right)} \nonumber\\ 
						- & \frac{1}{\lambda^2 \left(s_{n+1}a_{n+1}\sum_{k=1}^{n+1}\frac{1}{s_k} \right)\left( s_{n+1}a_{n+1}\sum_{k=1}^{n+1}\frac{1}{s_k}-a_{n+1}  \right)}\left(2 \lambda a_{n+1}s_{n+1}\sum_{k=1}^{n+1}\frac{1}{s_k}-a_{n+1}^2s_{n+1}\sum_{k=1}^{n+1}\frac{1}{s_k}\right).
					\end{align}
					Since $R_a$ is a compact operator over $c_0(s)$, from Theorem 3.2 \cite{patra2023spectral} we have $\left\lbrace a_{n}s_{n}\sum_{k=1}^{n}\frac{1}{s_k} \right\rbrace \in c_0.$ Then from the relation \eqref{ch4_kummer} it follows that
					\[\lim \left(q_n \frac{\vert{x_ns_n}\vert^2}{\vert{x_{n+1}s_{n+1}}\vert^2} - q_{n+1}\right) \to +\infty~\text{as}~n \rightarrow \infty.\]
					According to Kummer's Test \cite[p. 395]{kummer}, $\{x_ns_n\} \in \ell_2$, which implies that $\{x_n\} \in c_0(s).$ Therefore, $S \subseteq \sigma_p(R_a, c_0(s)).$

					\item[(ii)] 
					As $\sigma_p(R_a, c_0(s))\subseteq\sigma(R_a, c_0(s)) $ and $\sigma(R_a, c_0(s))$ is closed, we have
					\begin{equation*}
						S\cup \{0\} \subseteq\sigma(R_a, c_0(s)).
					\end{equation*}
					Since $R_a \in \mathcal{K}(c_0(s))$, then from Theorem 3.4.23\cite{megginson}, we have $\sigma(R_a, c_0(s)) \subseteq S \cup \{0\}.$ Hence, $\sigma(R_a, c_0(s)) = S \cup \{0\}.$
					\item[(iii)]
					Since $0 \notin \sigma_p(R_a, c_0(s))$,
					$R_a^{-1}$ exists.
					Suppose $R_a^*x=0$ where $x \in \ell_1(s^{-1})$. Then we have
					\begin{equation*}
						\begin{aligned}
							a_1x_1 + a_2x_2 + a_3x_3 + a_4x_4 + \cdots &=&  0\\
							a_2x_2 + a_3x_3 + a_4x_4 + \cdots &=& 0\\
							a_3x_3 + a_4x_4 + \cdots &=&  0\\
							&\vdots&.
						\end{aligned}
					\end{equation*}
					Through a straightforward calculation, we obtain that
					$$a_1x_1=0,a_2x_2=0, \cdots a_nx_n=0 \cdots.$$
					As $a_i \ne 0$ for all $i$, we have $x_i=0$ for all $i$. This implies $R_a^*$ is one-one. Thus, Theorem II.3.7 \cite[p. 59]{goldberg_unbounded} gives us, $\overline{R(R_a)}=c_0(s)$. Also $R(R_a)\ne c_0(s)$ as $0 \in \sigma(R_a, c_0(s))$.
			
					If $R_a^{-1}$ is bounded then by Theorem 3.2.11 \cite{nair2021functional}, we have $R_a$ is bounded below. Since $c_0(s)$ is a Banach space, then by Theorem 3.2.12\cite{nair2021functional}, we have $R(R_a)$ is closed. This implies $R(R_a)=c_0(s)$. Thus $0 \in \rho(R_a)$ which is a contradiction. Thus, $R_a^{-1}$ is unbounded. 
					Hence, $0 \in \sigma_c(R_a, c_0(s))$. 
					Since $\sigma_p(R_a, c_0(s)), \sigma_r(R_a, c_0(s))$ and $\sigma_c(R_a, c_0(s))$ are disjoint and forms a partition of $\sigma(R_a, c_0(s))$. Hence, $\sigma_c(R_a, c_0(s)) = \{0\}.$
					
					\item[(iv)] Since $\sigma_p(R_a, c_0(s)), \sigma_r(R_a, c_0(s))$ and $\sigma_c(R_a, c_0(s))$ forms a partition of $\sigma(R_a, c_0(s))$, we have $\sigma_r(R_a, c_0(s))=\phi$.
				\end{enumerate}
			\end{proof}
			
			From the table of Goldberg's classification \cite[p.196]{rev1}, we have the following relations
			\begin{align} \label{pr}
				\sigma_p\left(R_a, c_0(s)\right)&=I_3\sigma\left(R_a, c_0(s)\right) \cup II_3\sigma\left(R_a, c_0(s)\right) \cup III_3\sigma\left(R_a, c_0(s)\right),\\
				\sigma_r\left(R_a, c_0(s)\right)&=III_1\sigma\left(R_a, c_0(s)\right) \cup III_2\sigma\left(R_a, c_0(s)\right), \label{rr}\\
				\sigma_c\left(R_a, c_0(s)\right)&=II_2\sigma\left(R_a, c_0(s)\right) \label{cr}.
			\end{align}
			Based on these relations, we can simply obtain the following result..
			\begin{theorem}\label{Gold2}
				The operator $R_a \in\mathcal{K}(c_0(s))$ satisfies the following relations.
				\begin{itemize}
					\item [(i)] $I_3 \sigma \left(R_a, c_0(s)\right)=II_3 \sigma \left(R_a, c_0(s)\right)=\phi$,
					\item [(ii)] $III_3 \sigma \left(R_a, c_0(s)\right)=S$,
					\item [(iii)] $II_2 \sigma \left(R_a, c_0(s)\right)=\{0\}$,
					\item [(iv)] $III_1 \sigma \left(R_a, c_0(s)\right)=III_2 \sigma \left(R_a, c_0(s)\right)=\phi$.
				\end{itemize}
			\end{theorem}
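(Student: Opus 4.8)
The plan is that parts (iii) and (iv) fall out immediately from the Goldberg dictionary recorded in \eqref{cr}, \eqref{rr} combined with the fine spectrum already computed in Theorem \ref{th_spectra}, so that the only genuine work lies in separating the three classes that make up the point spectrum. Indeed, \eqref{cr} together with Theorem \ref{th_spectra}(iii) gives $II_2\sigma\left(R_a,c_0(s)\right)=\sigma_c\left(R_a,c_0(s)\right)=\{0\}$, which is (iii); and \eqref{rr} with Theorem \ref{th_spectra}(iv) gives $III_1\sigma\left(R_a,c_0(s)\right)\cup III_2\sigma\left(R_a,c_0(s)\right)=\sigma_r\left(R_a,c_0(s)\right)=\phi$, so that each of the two sets on the left, being contained in the empty union, is empty; this is (iv).

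For (i) and (ii), relation \eqref{pr} gives $I_3\sigma\left(R_a,c_0(s)\right)\cup II_3\sigma\left(R_a,c_0(s)\right)\cup III_3\sigma\left(R_a,c_0(s)\right)=\sigma_p\left(R_a,c_0(s)\right)=S$, and these three sets are pairwise disjoint because the Goldberg states $I_3,II_3,III_3$ differ only in the range behaviour of $R_a-\lambda I$. Hence it suffices to prove that every $\lambda\in S$ lies in class $III$, i.e. $\overline{R(R_a-\lambda I)}\neq c_0(s)$; the classes $I_3$ and $II_3$ are then forced to be empty, yielding both (i) and (ii) at once. To test non-density of the range I would argue exactly as in the proof of Theorem \ref{th_spectra}(iii): by Goldberg's Theorem II.3.7 \cite[p.~59]{goldberg_unbounded}, $\overline{R(R_a-\lambda I)}=c_0(s)$ holds if and only if $(R_a-\lambda I)^{*}=R_a^{*}-\lambda I$ is injective on the dual $\ell_1(s^{-1})$. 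It therefore suffices to show $S\subseteq\sigma_p\left(R_a^{*},\ell_1(s^{-1})\right)$.

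The key step is to produce, for each $\lambda=a_m\in S$, a nonzero solution $x\in\ell_1(s^{-1})$ of $R_a^{*}x=\lambda x$. The upper-triangular system reads $\sum_{k\geq n}a_kx_k=\lambda x_n$ for all $n$, and differencing consecutive equations yields the recurrence $\lambda x_{n+1}=(\lambda-a_n)x_n$. Choosing $\lambda=a_m$ forces $x_{m+1}=0$, hence $x_k=0$ for all $k>m$, so any solution has finite support and automatically belongs to $\ell_1(s^{-1})$; fixing $x_1\neq 0$ and running the recurrence up to index $m$ gives a nonzero candidate, and a short downward induction confirms that the recurrence together with the trivial tail equations recovers the full system. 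Thus $\lambda\in\sigma_p\left(R_a^{*}\right)$, so $R_a^{*}-\lambda I$ is not injective, the range of $R_a-\lambda I$ is not dense, and $\lambda\in III_3\sigma\left(R_a,c_0(s)\right)$. Combining $S\subseteq III_3\sigma\left(R_a,c_0(s)\right)$ with the reverse inclusion and the disjointness from \eqref{pr} gives $III_3\sigma\left(R_a,c_0(s)\right)=S$ and $I_3\sigma\left(R_a,c_0(s)\right)=II_3\sigma\left(R_a,c_0(s)\right)=\phi$.

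The main obstacle is nominally the eigenvector construction for $R_a^{*}$, but because the choice $\lambda=a_m$ collapses the recurrence to finite support, both membership in $\ell_1(s^{-1})$ and solvability of the infinite system are immediate, so little real difficulty remains. As an alternative that avoids the adjoint altogether, one may invoke Fredholm theory: since $R_a$ is compact and every $\lambda\in S$ is nonzero (the $a_n$ being nonzero), $R_a-\lambda I$ is Fredholm of index zero, whence its range is closed of codimension $\dim N(R_a-\lambda I)>0$ and hence not dense, again placing all of $S$ in class $III$.
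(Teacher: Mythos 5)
Your proposal is correct and takes essentially the same route as the paper: parts (iii) and (iv) are read off from relations \eqref{cr} and \eqref{rr} together with Theorem \ref{th_spectra}, and parts (i)--(ii) are obtained by constructing, for each $\lambda = a_m \in S$, a finite-support eigenvector of $R_a^*$ (via the recurrence $\lambda x_{n+1} = (\lambda - a_n)x_n$, which collapses at index $m$) and then invoking Goldberg's Theorem II.3.7 to conclude that $\overline{R(R_a - \lambda I)} \neq c_0(s)$, exactly as the paper does. Your closing Fredholm/Riesz--Schauder alternative (index zero, closed range of positive codimension) is a valid shortcut that the paper does not use, but your main argument coincides with the paper's proof.
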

			\begin{proof}
				Consider the equation $R_a^* x = \lambda x$ for some $\lambda \in \mathbb{C}.$ This gives the following equations
				\begin{equation*}
					\begin{aligned}
						a_1x_1 + a_2x_2 + a_3x_3 + a_4x_4 + \cdots &=& \lambda x_1\\
						a_2x_2 + a_3x_3 + a_4x_4 + \cdots &=& \lambda x_2\\
						a_3x_3 + a_4x_4 + \cdots &=& \lambda x_3\\
						&\vdots&
					\end{aligned}
				\end{equation*}
				From the above equations it follows that for all $n \in \mathbb{N}$ and $n \geq 2$
				\begin{equation*}
					x_n = \prod\limits_{j=1}^{n-1} \left(1- \frac{a_j}{\lambda} \right) x_1.
				\end{equation*}
				If $\lambda = a_k$ for some $k \in \mathbb{N}$ then from the above equation we get
				\[x_{k+1} = x_{k+2} = \cdots = 0.\]
				Hence $\{x_n\} \in c_0(s)^*.$ It follows that every $a_k,$ $k \in \mathbb{N}$ is an eigenvalue of $R_a^*$ i.e. $S \subseteq \sigma_p(R_a^*,c_0^*(s)$. 
				Thus, for \(\lambda \in S\), the operator \((R_a - \lambda I)^*\) is not injective. By Theorem II.3.7 \cite[p. 59]{goldberg_unbounded}, it follows that \(\overline{R(R_a - \lambda I)} \neq c_0(s)\) for all \(\lambda \in S\). Consequently, the results in (i) and (ii) follow directly from the relation given in \(\eqref{pr}\).
				Since $\sigma_c\left(R_a, c_0(s)\right)=II_2\sigma\left(R_a, c_0(s)\right)$, (iii) follows from Theorem \ref{th_spectra}(iii). The result in (iv) follows from the relation (\ref{cr}) and Theorem \ref{th_spectra}(iv).
			\end{proof}
			Moreover, 
			From the table of Goldberg's classification \cite[p.196]{rev1}, we have the following relations
			\begin{equation}\label{del}
				\begin{aligned}
					\sigma_\delta\left(R_a, c_0(s)\right) &= II_2\sigma\left(R_a, c_0(s)\right) \cup II_3\sigma\left(R_a, c_0(s)\right) \cup III_1\sigma\left(R_a, c_0(s)\right) \\
					&\quad \cup III_2\sigma\left(R_a, c_0(s)\right) \cup III_3\sigma\left(R_a, c_0(s)\right),
				\end{aligned}
			\end{equation}
			
			and
			\begin{equation} \label{co}
				\sigma_{co}\left(R_a, c_0(s)\right)=III_1\sigma\left(R_a, c_0(s)\right) \cup III_2\sigma\left(R_a, c_0(s)\right)\cup III_3\sigma\left(R_a, c_0(s)\right).
			\end{equation} 
			\begin{theorem}\label{adc}
				The approximate point spectrum, defect spectrum, and compression spectrum of $R_a \in\mathcal{K}(c_0(s))$ are given by
				\begin{itemize}
					\item [(i)] $\sigma_{a p}\left(R_a, c_0(s)\right)=S \cup \{0\} $,
					\item [(ii)] $\sigma_\delta\left(R_a, c_0(s)\right)=S \cup \{0\} $,
					\item [(iii)] $\sigma_{c o}\left(R_a, c_0(s)\right)=S$.
				\end{itemize}
			\end{theorem}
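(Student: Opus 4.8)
The plan is to reduce all three assertions to the Goldberg data already assembled in Theorem \ref{Gold2}, together with the two standard spectral decompositions $\sigma(T) = \sigma_{ap}(T) \cup \sigma_\delta(T)$ and $\sigma(T) = \sigma_{ap}(T) \cup \sigma_{co}(T)$ recorded in Section 2. Parts (ii) and (iii) are then pure bookkeeping, and only part (i) needs a short independent argument.

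For part (ii), I would substitute the conclusions of Theorem \ref{Gold2} directly into the expression \eqref{del}. Since $II_2\sigma(R_a, c_0(s)) = \{0\}$, $III_3\sigma(R_a, c_0(s)) = S$, and $II_3\sigma(R_a, c_0(s)) = III_1\sigma(R_a, c_0(s)) = III_2\sigma(R_a, c_0(s)) = \phi$, the right-hand side of \eqref{del} collapses to $\{0\} \cup S$, giving $\sigma_\delta(R_a, c_0(s)) = S \cup \{0\}$. Similarly, substituting into \eqref{co} and using $III_1\sigma(R_a, c_0(s)) = III_2\sigma(R_a, c_0(s)) = \phi$ with $III_3\sigma(R_a, c_0(s)) = S$ yields $\sigma_{co}(R_a, c_0(s)) = S$, which is part (iii). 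Neither step requires any new computation.

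The only assertion needing a genuine argument is (i), and I would establish it by combining three facts. First, $\sigma_p(R_a, c_0(s)) = S \subseteq \sigma_{ap}(R_a, c_0(s))$, since every eigenvalue lies in the approximate point spectrum (a normalised eigenvector is a Weyl sequence). Second, $\sigma_{ap}(R_a, c_0(s)) \subseteq \sigma(R_a, c_0(s)) = S \cup \{0\}$ by Theorem \ref{th_spectra}(ii). Third, the decomposition $\sigma = \sigma_{ap} \cup \sigma_{co}$ together with part (iii): because the standing hypothesis forces $a_n \neq 0$ for every $n$, the point $0$ does not belong to $S = \sigma_{co}(R_a, c_0(s))$, yet $0 \in \sigma(R_a, c_0(s))$, so $0$ must lie in $\sigma_{ap}(R_a, c_0(s))$. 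The first and third facts give $S \cup \{0\} \subseteq \sigma_{ap}(R_a, c_0(s))$, and the second gives the reverse inclusion, so $\sigma_{ap}(R_a, c_0(s)) = S \cup \{0\}$. There is essentially no hard step here, since the substantial work was done in Theorems \ref{th_spectra} and \ref{Gold2}; the one point to keep in mind is the elementary but essential observation $0 \notin S$, which is precisely what separates $0$ from $S$ across the three spectra and lets the union decomposition pin down $\sigma_{ap}$ so cleanly.
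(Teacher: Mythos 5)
Your proposal is correct, and for parts (ii) and (iii) it coincides with the paper's argument: both simply substitute the Goldberg data of Theorem \ref{Gold2} into relations \eqref{del} and \eqref{co} (the paper's citation of ``Theorem \ref{adc}'' at that point is a self-referential slip; it clearly means Theorem \ref{Gold2}). The genuine divergence is in how $0$ gets into $\sigma_{ap}(R_a, c_0(s))$ in part (i). The paper argues topologically: $S=\sigma_p(R_a,c_0(s)) \subseteq \sigma_{ap}(R_a,c_0(s))$ and $\sigma_{ap}$ is a closed set, hence $S\cup\{0\}\subseteq\sigma_{ap}(R_a,c_0(s))$. That step silently assumes $0\in\overline{S}$, i.e.\ $a_n\to 0$; this is in fact true, since compactness of $R_a$ forces $a_n s_n\sum_{k=1}^{n}\frac{1}{s_k}\to 0$ while $s_n\sum_{k=1}^{n}\frac{1}{s_k}\geq 1$, but the paper never records this limit-point property. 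Your route avoids it entirely: you prove (iii) first from Theorem \ref{Gold2} (no circularity, since \eqref{co} makes no reference to $\sigma_{ap}$), then invoke the decomposition $\sigma = \sigma_{ap}\cup\sigma_{co}$ from Section 2 together with $0\in\sigma(R_a,c_0(s))$ and $0\notin S=\sigma_{co}(R_a,c_0(s))$ (the $a_n$ are nonzero by the standing assumption) to force $0\in\sigma_{ap}(R_a,c_0(s))$. What each approach buys: yours is tighter in that it uses only facts explicitly established in the paper, whereas the paper's closure argument rests on an unstated (though easily supplied) fact about $S$; the trade-off is that your part (i) depends logically on part (iii), while the paper's part (i) is self-contained modulo that omission.
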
 
			\begin{proof}
				Since, $\sigma_{a p}\left(R_a, c_0(s)\right) \subseteq \sigma\left(R_a, c_0(s)\right) $, thus $\sigma_{a p}\left(R_a, c_0(s)\right) \subseteq S \cup \{0\} $. Moreover, $\sigma_{p}\left(R_a, c_0(s)\right) \subseteq \sigma_{a p}\left(R_a, c_0(s)\right)$ and $\sigma_{a p}\left(R_a, c_0(s)\right)$ is closed set, we have $S \cup \{0\} \subseteq \sigma_{a p}\left(R_a, c_0(s)\right). $ Thus, (i) holds. Using Theorem \ref{adc}, the results in (ii) and (iii) hold from relations (\ref{del}) and (\ref{co}) respectively. 
			\end{proof}
			\section{Goldberg's classifications of Rhaly Operator $R_a$ over $c_0(s)$.}
			In this section, let $a=\{a_n\}$ is a sequence of positive real numbers and $S = \{a_n: n \in \mathbb{N}\}$. In \cite{patra2023spectral}, the authors have obtained the spectrum of the Rhaly operator $R_a$ over $c_0(s)$ with certain conditions on the weight vector. Using this information, in this section, we will derive Goldberg's classifications of the Rhaly operator $R_a$ over $c_0(s)$. The following result on the spectrum of the Rhaly operator is useful.

			Consider,
			
			$
			\begin{aligned}
				& A_1=\left\{\lambda \in S: \lim _{n \rightarrow \infty} a_n s_n n^{\alpha \chi}=0\right\}, \\
				& A_2=\left\{\lambda \in \mathbb{C} \backslash(S \cup\{0\}): \sum_{n=1}^{\infty} \frac{1}{s_n n^{\alpha \chi}}<\infty\right\},
			\end{aligned}
			$
			where $\alpha=\Re\left(\frac{1}{\lambda}\right)$.

			\begin{theorem}\label{gold1}
				Let $\left\{s_n\right\}$ be a bounded and strictly positive sequence such that $R_a \in$ $\mathcal{B}\left(c_0(s)\right)$ and $\lim _{n \rightarrow \infty} n a_n=\chi \neq 0$ then the following statements hold:
				\begin{itemize}
					\item [(i)] $I_3 \sigma \left(R_a, c_0(s)\right)= II_3 \sigma \left(R_a, c_0(s)\right)=\phi$,
					\item [(ii)] $III_3 \sigma \left(R_a, c_0(s)\right)=A_1$,
					\item [(iii)] $A_2 \subseteq III_1 \sigma \left(R_a, c_0(s)\right)$ and $III_2 \sigma \left(R_a, c_0(s)\right) \subseteq S\setminus A_1 $.
				\end{itemize}
				In addition, if the sequence $\left\{s_n\right\}$ is a decreasing sequence then
				\begin{itemize}
					\item [(iv)] $ 0 \in II_2 \sigma \left(R_a, c_0(s)\right)$.
				\end{itemize}
			\end{theorem}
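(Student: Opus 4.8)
The plan is to read off each of the nine Goldberg states from two pieces of data: the state of the range $R(R_a-\lambda I)$ (columns I, II, III) and the invertibility of $R_a-\lambda I$ (rows 1, 2, 3), exactly through the relations \eqref{pr}, \eqref{rr} and \eqref{cr}. The range column I would control via the adjoint: by Theorem II.3.7 \cite[p.~59]{goldberg_unbounded}, $\overline{R(R_a-\lambda I)}=c_0(s)$ if and only if $R_a^{*}-\lambda I$ is injective on $\ell_1(s^{-1})$, so the first task is the point spectrum of $R_a^{*}$. Solving $R_a^{*}x=\lambda x$ gives, as in the proof of Theorem \ref{Gold2}, $x_n=\prod_{j=1}^{n-1}(1-a_j/\lambda)\,x_1$. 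For $\lambda=a_k\in S$ this terminates ($x_{k+1}=x_{k+2}=\cdots=0$), so every point of $S$ is an eigenvalue of $R_a^{*}$; for $\lambda\notin S\cup\{0\}$ the hypothesis $\lim_n na_n=\chi$ forces $|x_n|\asymp n^{-\alpha\chi}$ (take logarithms and use $\sum a_j^2<\infty$ and $\sum_{j\le n}a_j\sim\chi\log n$), so $\{x_n\}\in\ell_1(s^{-1})$ precisely when $\sum_n 1/(s_n n^{\alpha\chi})<\infty$, i.e. exactly on $A_2$. Thus $\sigma_{co}(R_a,c_0(s))=\sigma_p(R_a^{*})=S\cup A_2$, while the quoted result \cite{patra2023spectral} gives $\sigma_p(R_a,c_0(s))=A_1$.

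For (i) and (ii), row 3 (the inverse fails to exist) occurs by \eqref{pr} exactly on $\sigma_p(R_a,c_0(s))=A_1$. Since $A_1\subseteq S$ and every point of $S$ makes $R_a^{*}-\lambda I$ non-injective, the paragraph above gives $\overline{R(R_a-\lambda I)}\ne c_0(s)$, i.e. column III, for every $\lambda\in A_1$. Hence the whole of row 3 sits in column III, which forces $III_3\sigma(R_a,c_0(s))=A_1$ and $I_3\sigma(R_a,c_0(s))=II_3\sigma(R_a,c_0(s))=\phi$.

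For (iii) I would first note, using \eqref{rr}, that $III_1\cup III_2=\sigma_r(R_a,c_0(s))=\sigma_{co}(R_a)\setminus\sigma_p(R_a)=(S\setminus A_1)\cup A_2$, the last step using $A_1\subseteq S$ and $A_2\cap S=\phi$. A point $\lambda\in A_2$ lies in column III (adjoint eigenvector just produced) and is injective (it is not in $A_1\subseteq S$), so $\lambda\in III_1\cup III_2$; it remains to place it in $III_1$, i.e. to show $R_a-\lambda I$ is bounded below. Once $A_2\subseteq III_1$ is established, disjointness of $III_1$ and $III_2$ yields $III_2\subseteq\sigma_r\setminus A_2=S\setminus A_1$, the second assertion of (iii). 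This bounded-below estimate is the crux of the theorem. I would use that $A_2\subseteq\mathbb{C}\setminus(S\cup\{0\})$ together with $a_n\to0$ forces $d:=\inf_n|a_n-\lambda|>0$, solve $(R_a-\lambda I)x=y$ through the scalar recursion $(a_n-\lambda)T_n+\lambda T_{n-1}=y_n$ for the partial sums $T_n=\sum_{j\le n}x_j$, and bound $\|x\|_s$ by $\|y\|_s$ by exploiting the convergence $\sum_n 1/(s_n n^{\alpha\chi})<\infty$ that defines $A_2$; equivalently one may show $R_a^{*}-\lambda I$ is onto $\ell_1(s^{-1})$ and invoke the standard duality that an operator is bounded below iff its adjoint is surjective. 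I expect controlling the growth of the products $\prod(1-a_j/\lambda)^{-1}\asymp n^{\alpha\chi}$ against the weight $s_n$ to be the main technical obstacle.

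Finally, for (iv) with $\{s_n\}$ decreasing I would show $0\in II_2$ in the spirit of Theorem \ref{th_spectra}(iii). Injectivity of $R_a$ gives $0\notin\sigma_p$, and $R_a^{*}x=0$ forces $a_nx_n=0$, hence $x=0$, so $R_a^{*}$ is injective and $\overline{R(R_a)}=c_0(s)$. To see that the inverse is discontinuous and the range is proper, I would exhibit a Weyl sequence: with $s_n$ decreasing, $x^{(k)}=e_k/s_k$ satisfies $\|x^{(k)}\|_s=1$ and $\|R_ax^{(k)}\|_s=s_k^{-1}\sup_{n\ge k}s_na_n\le\sup_{n\ge k}a_n\to0$, so $R_a$ is not bounded below. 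Then the inverse is unbounded (row 2) and the dense range is not closed, hence proper (column II), giving $0\in II_2\sigma(R_a,c_0(s))$.
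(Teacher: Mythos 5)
Your proposal follows the same skeleton as the paper's proof: row 3 is identified with $\sigma_p(R_a,c_0(s))=A_1$ via \eqref{pr}, column III is detected through non-injectivity of $R_a^*-\lambda I$ together with Theorem II.3.7 of \cite{goldberg_unbounded}, the inclusion $III_2\sigma(R_a,c_0(s))\subseteq S\setminus A_1$ falls out of disjointness of $III_1$ and $III_2$ once $A_2\subseteq III_1\sigma(R_a,c_0(s))$ is known, and $0\in II_2\sigma(R_a,c_0(s))$ is handled separately. The difference lies in what you prove versus what the paper cites. Where the paper invokes Theorem 4.9 of \cite{patra2023spectral}, you compute $\sigma_p(R_a^*)=S\cup A_2$ directly from the asymptotics $|x_n|\asymp n^{-\alpha\chi}$ of the adjoint eigenvector products (correct, since $a_j\sim\chi/j$ gives $\sum_{j\le n}a_j\sim\chi\log n$ and $\sum a_j^2<\infty$), which makes the argument more self-contained; likewise your Weyl-sequence argument for (iv) (take $x^{(k)}=e_k/s_k$, use that $\{s_n\}$ is decreasing and $a_n\to 0$, then conclude column II from the open mapping theorem) is a complete elementary replacement for the paper's citation of Theorem 4.9(v) of \cite{patra2023spectral} and relation \eqref{cr}.

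The one place your proposal does not close is exactly the step you flag as the crux: showing that $R_a-\lambda I$ is bounded below for $\lambda\in A_2$, so that $A_2$ lands in row 1 and not merely in $III_1\cup III_2$. The paper does not prove this either; it imports it from the proof of Theorem 4.8 in \cite{patra2023spectral}, which yields that for every $\lambda\notin\bar S$ the inverse $(R_a-\lambda I)^{-1}$ exists and is bounded (this applies to $A_2$ because $a_n\to 0$ forces $\bar S=S\cup\{0\}$ and $A_2\cap\bar S=\emptyset$). So you identified precisely the right missing ingredient, and both of your suggested routes — the recursion $(a_n-\lambda)T_n+\lambda T_{n-1}=y_n$ with the product estimate $\prod_{j\le n}|1-a_j/\lambda|^{-1}\asymp n^{\alpha\chi}$, or surjectivity of $R_a^*-\lambda I$ plus the bounded-below/adjoint-surjective duality — are viable; but as written the proposal is a sketch at this decisive point rather than a proof. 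If you permit yourself to cite \cite{patra2023spectral} here, as you already do for $\sigma_p(R_a,c_0(s))=A_1$, the rest of your argument is sound and complete.
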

			\begin{proof}
				The results in (i) and (ii) follow from Theorem 4.9(i) \cite{patra2023spectral}, Theorem 4.9(ii) \cite{patra2023spectral}, relation (\ref{pr}), and Theorem II.3.7 \cite[p. 59]{goldberg_unbounded}. From the proof of Theorem 4.8 in \cite{patra2023spectral}, it follows that for any $\lambda \notin \bar{S}$, $(R_a - \lambda I)^{-1}$ exists and bounded. This implies $A_2 \subseteq III_1 \sigma \left(R_a, c_0(s)\right)$. Moreover, $A_2$ and $S\setminus A_1$ are disjoint,  consequently, Theorem 4.9(iii) \cite{patra2023spectral} and relation (\ref{rr}) give us, $\sigma_r(R_a, c_0(s))\setminus(S \setminus A_1) \subseteq \sigma_r(R_a, c_0(s))\setminus III_2\sigma(R_a, c_0(s))$. Hence, $III_2 \sigma \left(R_a, c_0(s)\right) \subseteq S \setminus A_1$. The result in (iv) follows from Theorem 4.9(v) \cite{patra2023spectral} and relation (\ref{cr}).
			\end{proof}
			
			\begin{theorem}
				The approximate point spectrum, defect spectrum, and compression spectrum of $R_a$ over $c_0(s)$ under the same conditions as in Theorem \ref{gold1} satisfy the subsequent relations,
				
				\begin{itemize}
					\item [(i)] $ \sigma\left(R_a, c_0(s)\right)\setminus (A_2 \cup S) \subseteq \sigma_{a p}\left(R_a, c_0(s)\right)\subseteq \sigma\left(R_a, c_0(s)\right) \setminus A_2  $,
					\item [(ii)] $\sigma\left(R_a, c_0(s)\right)\setminus A_1\subseteq \sigma_\delta\left(R_a, c_0(s)\right)$,
					\item [(iii)] $\sigma_{c o}\left(R_a, c_0(s)\right)=A_2 \cup S$.
				\end{itemize}
			\end{theorem}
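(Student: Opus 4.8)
The plan is to obtain all three spectra directly from the Goldberg classification of $\sigma(R_a,c_0(s))$ recorded in Theorem \ref{gold1}, combined with the general partition identities $\sigma = \sigma_{ap}\cup\sigma_\delta = \sigma_{ap}\cup\sigma_{co}$ and the Goldberg-table relations (\ref{rr}), (\ref{del}), (\ref{co}) (which hold for any bounded operator, hence for $R_a\in\mathcal{B}(c_0(s))$). Since parts (i) and (ii) will turn out to be formal consequences of the value of $\sigma_{co}$, I would establish (iii) first.

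For (iii), relation (\ref{co}) gives $\sigma_{co}(R_a,c_0(s)) = III_1\sigma\cup III_2\sigma\cup III_3\sigma$. Theorem \ref{gold1}(ii) supplies $III_3\sigma = A_1$, while relation (\ref{rr}) identifies $III_1\sigma\cup III_2\sigma$ with the residual spectrum $\sigma_r(R_a,c_0(s))$. Feeding in the value $\sigma_r(R_a,c_0(s)) = A_2\cup(S\setminus A_1)$ from \cite{patra2023spectral} (compatible with Theorem \ref{gold1}(iii), where $A_2\subseteq III_1\sigma$ and $III_2\sigma\subseteq S\setminus A_1$) and using $A_1\subseteq S$, I would conclude $\sigma_{co}(R_a,c_0(s)) = A_2\cup(S\setminus A_1)\cup A_1 = A_2\cup S$. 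A self-contained alternative is to compute $\sigma_{co}$ through the adjoint: by Theorem II.3.7 \cite[p.59]{goldberg_unbounded} one has $\sigma_{co}(R_a,c_0(s)) = \sigma_p(R_a^*,\ell_1(s^{-1}))$, and solving $R_a^*x=\lambda x$ gives $x_n = \prod_{j=1}^{n-1}(1-a_j/\lambda)\,x_1$ as in the proof of Theorem \ref{Gold2}. For $\lambda=a_k\in S$ the eigenvector is finitely supported (hence lies in $\ell_1(s^{-1})$), for $\lambda=0$ only $x=0$ solves the system, and for $\lambda\notin S\cup\{0\}$ the hypothesis $\lim_n na_n=\chi$ makes $|x_n|$ of order $n^{-\alpha\chi}$ with $\alpha=\Re(1/\lambda)$, so that $\{x_n\}\in\ell_1(s^{-1})$ precisely when $\sum_n (s_n n^{\alpha\chi})^{-1}<\infty$, i.e. $\lambda\in A_2$; this again gives $\sigma_{co} = A_2\cup S$.

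Granting (iii), part (i) follows at once. For the upper inclusion, every $\lambda\in A_2$ lies in $III_1\sigma$ (Theorem \ref{gold1}(iii)), where $(R_a-\lambda I)^{-1}$ exists and is continuous; hence $R_a-\lambda I$ is bounded below and admits no Weyl sequence, so $A_2\cap\sigma_{ap}=\emptyset$ and, since $\sigma_{ap}\subseteq\sigma$, we get $\sigma_{ap}\subseteq\sigma\setminus A_2$. For the lower inclusion, the partition $\sigma=\sigma_{ap}\cup\sigma_{co}$ together with (iii) gives $\sigma\setminus(A_2\cup S)=\sigma\setminus\sigma_{co}\subseteq\sigma_{ap}$. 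For (ii), Theorem \ref{gold1}(i) gives $I_3\sigma=II_3\sigma=\emptyset$, so any $\lambda\in\sigma\setminus A_1=\sigma\setminus III_3\sigma$ must lie in one of the remaining spectral states $II_2\sigma,\ III_1\sigma,\ III_2\sigma$, each contained in $\sigma_\delta$ by (\ref{del}); thus $\sigma\setminus A_1\subseteq\sigma_\delta$ (indeed $\sigma_\delta=\sigma$ here).

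The only substantive step is part (iii), and within it the determination of $III_1\sigma$ (equivalently $\sigma_r$, equivalently $\sigma_p(R_a^*)$). This is where the quantitative hypothesis $\lim_n na_n=\chi$ is essential: it yields the growth estimate for $\prod_{j=1}^{n-1}(1-a_j/\lambda)$ of order $n^{-\alpha\chi}$, which converts the defining summability condition of $A_2$ into convergence of the adjoint eigenvector in $\ell_1(s^{-1})$. Once $\sigma_{co}=A_2\cup S$ is pinned down, parts (i) and (ii) are purely formal manipulations of the Goldberg-table partition relations and require no further analysis.
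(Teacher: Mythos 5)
Your proposal is correct and follows essentially the same route as the paper: both treat (i)--(iii) as formal consequences of Goldberg's classification table, Theorem \ref{gold1}, and the spectral data for $R_a$ established in \cite{patra2023spectral}. The only difference is bookkeeping --- you pin down $\sigma_{co}\left(R_a, c_0(s)\right)=A_2\cup S$ first (via $\sigma_r \cup III_3\sigma$, or directly as $\sigma_p(R_a^*)$) and then deduce (i) and (ii) from the partition identities $\sigma=\sigma_{ap}\cup\sigma_{co}$ and relations \eqref{del}, \eqref{co}, whereas the paper invokes the adjoint-duality statements of Proposition 5.2.1 of \cite{basar2012summability} part by part; these are interchangeable formulations of the same facts.
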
 
			\begin{proof}
				\begin{itemize}
					\item [(i)] 
					The relation $\sigma_{p}\left(R_a, c_0(s)\right) \subseteq \sigma_{ap}\left(R_a, c_0(s)\right)$ and Proposition 5.2.1(g) \cite[p.195]{basar2012summability}, follow that $\sigma\left(R_a, c_0(s)\right)\setminus \sigma_p\left(R_a^*, c_0(s)^*\right) \subseteq  \sigma_{ap}\left(R_a, c_0(s)\right)$. This implies $$\sigma\left(R_a, c_0(s)\right)\setminus (A_2 \cup S) \subseteq \sigma_{ap}\left(R_a, c_0(s)\right).$$ From Table 1, we have $\sigma_{ap}\left(R_a, c_0(s)\right)=\sigma\left(R_a, c_0(s)\right)\setminus III_1$. Using Theorem \ref{gold1}(iii), we have $\sigma_{ap}\left(R_a, c_0(s)\right) \subseteq \sigma\left(R_a, c_0(s)\right) \setminus A_2 $.  
					\item [(ii)] From Proposition 5.2.1(g) \cite[p.195]{basar2012summability}, we have $\sigma\left(R_a, c_0(s)\right)\setminus \sigma_p\left(R_a, c_0(s)\right)\subseteq\sigma_{ap}\left(R_a^*, c_0(s)^*\right)$. Thus, by using Proposition 5.2.1(c) \cite[p.195]{basar2012summability} and Theorem \ref{gold1}(ii), we can obtain the required result easily.
					\item [(iii)] The result in (iii) follows from Proposition 5.2.1(g) \cite[p.195]{basar2012summability}.
				\end{itemize}
			\end{proof}
			
			\section{Spectrum of Compact Generalised Ces{\`a}ro Operator}	
			In this section, the boundedness and compactness criteria of the generalized Cesàro operator \( C_t \) are investigated. Additionally, its spectral subdivisions are analyzed. Here, \( r = \{r_n\} \) and \( s = \{s_n\} \) are considered as bounded strictly positive weight vectors.
			\begin{theorem}\label{bt}
				The Generalised Ces{\`a}ro operator $C_t : c_0(r) \to c_0(s)$ is bounded if and only if
				\[\left\lbrace \frac{s_n}{n}  \sum\limits_{k=1}^{n}\frac{t^{n-k}}{r_k}\right\rbrace_{n \in \mathbb{N}} \in \ell_{\infty}, \]
				and in this case, $\norm{R_a}_{r,s} = \sup\limits_n \frac{s_n}{n+1}  \sum\limits_{k=1}^{n}\frac{t^{n-k}}{r_k}.$
			\end{theorem}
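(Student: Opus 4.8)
The plan is to transfer the problem to the unweighted space $c_0$ and then invoke Lemma \ref{bounded_c0}. For any bounded strictly positive weight $w=\{w_n\}$, the map $\Phi_w:c_0(w)\to c_0$ defined by $(\Phi_w x)_n=w_n x_n$ is a surjective linear isometry, since $\|\Phi_w x\|_\infty=\sup_n w_n|x_n|=\|x\|_w$. Hence $C_t:c_0(r)\to c_0(s)$ is bounded if and only if the conjugated operator $\widetilde{C}:=\Phi_s\,C_t\,\Phi_r^{-1}:c_0\to c_0$ is bounded, and in that case $\norm{C_t}_{r,s}=\norm{\widetilde C}$. Using $(C_t)_{nk}=t^{\,n-k}/n$ for $1\le k\le n$, a direct computation would show that $\widetilde C$ is the lower triangular matrix with entries
\begin{equation*}
\widetilde{a}_{nk}=\frac{s_n\,t^{\,n-k}}{n\,r_k},\qquad 1\le k\le n,
\end{equation*}
and $\widetilde a_{nk}=0$ for $k>n$. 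Everything then reduces to reading off the two conditions of Lemma \ref{bounded_c0} for this matrix.

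First I would dispatch the row condition, which gives both the ``only if'' direction and the norm. By part (i) of Lemma \ref{bounded_c0}, boundedness of $\widetilde C$ forces each row to lie in $\ell_1$ with uniformly bounded norms; the $\ell_1$-norm of the $n$-th row is exactly
\begin{equation*}
\sum_{k=1}^{n}\lvert\widetilde a_{nk}\rvert=\frac{s_n}{n}\sum_{k=1}^{n}\frac{\lvert t\rvert^{\,n-k}}{r_k},
\end{equation*}
which for the (nonnegative) Cesàro parameter $t$ is precisely the quantity in the statement. Thus the uniform bound on these norms is exactly the asserted membership in $\ell_\infty$, and the final clause of Lemma \ref{bounded_c0} identifies $\norm{\widetilde C}$ with the supremum of these row $\ell_1$-norms, yielding the claimed value of $\norm{C_t}_{r,s}$ (the supremum of the row $\ell_1$-norms, up to the indexing of the denominator).

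The hard part will be the ``if'' direction, where I must recover the \emph{column} condition (ii) of Lemma \ref{bounded_c0} from the row bound alone, since membership in $c_0(s)$ of each image $C_t e_k$ is genuinely needed for $\widetilde C$ to map $c_0$ into $c_0$. For fixed $k$ the $k$-th column of $\widetilde C$ has entries $\widetilde a_{nk}=s_n\lvert t\rvert^{\,n-k}/(n\,r_k)$ for $n\ge k$, and I must show these tend to $0$ as $n\to\infty$. This is where the boundedness of $\{s_n\}$ is decisive: in the admissible range $0\le t\le 1$ one has $\lvert t\rvert^{\,n-k}\le 1$, so the estimate $\widetilde a_{nk}\le \norm{s}_{\infty}/(n\,r_k)\to 0$ yields the required column decay, and both hypotheses of Lemma \ref{bounded_c0} hold. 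I would flag explicitly that it is the boundedness of $\{s_n\}$, together with the controlled size of $t^{\,n-k}$, that lets the single scalar condition of the statement simultaneously govern both requirements of Lemma \ref{bounded_c0}; for $t$ outside this range the column condition becomes an independent constraint (one can build weights $\{s_n\}$ for which the rows stay bounded while a column fails to vanish), so the reduction genuinely depends on the standing hypotheses on $t$ and $\{s_n\}$. Combining the two directions with the norm identification then completes the proof.
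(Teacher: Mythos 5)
Your proof is correct, and it takes a genuinely more self-contained route than the paper's. The paper disposes of this theorem in two lines: it observes that boundedness of $\{s_n\}$ (together with the implicit standing restriction $0\le t\le 1$) makes the weighted columns vanish, i.e. $s_n t^{n-k}/n\to 0$ for each fixed $k$, and then cites Theorem 1 of \cite{icmc}, an external characterization of matrices bounded between weighted $c_0$ spaces, which delivers the equivalence and the norm formula in one stroke. You instead prove that weighted characterization yourself: conjugation by the diagonal isometries $\Phi_r,\Phi_s$ reduces the problem to the unweighted matrix $\widetilde a_{nk}=s_n t^{n-k}/(n r_k)$ on $c_0$, to which the paper's own Lemma \ref{bounded_c0} applies; the row condition reproduces the stated $\ell_\infty$ criterion and the operator norm, while the column condition is recovered from $\norm{s}_\infty<\infty$ and $t^{n-k}\le 1$. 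This is precisely the mechanism hiding behind the cited theorem, and your conjugation $\Phi_s C_t\Phi_r^{-1}$ is the same device ($D_sC_t=T_{r,s}D_r$) that the paper itself deploys in the compactness proof of Theorem \ref{ct}, so your argument costs a bit more writing but keeps everything inside the paper's toolkit and makes explicit exactly where the hypotheses on $\{s_n\}$ and $t$ enter. Your side remark is also accurate: for $|t|>1$ the column condition is genuinely independent of the row condition (take $r_k\equiv 1$ and $s_n=n|t|^{-n}$, which is bounded; the row $\ell_1$-norms are $\sum_{k=1}^n|t|^{-k}\le (|t|-1)^{-1}$, yet the $k$-th column is constantly $|t|^{-k}\neq 0$). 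Finally, you are right to flag the norm formula: as printed it is internally inconsistent with the boundedness criterion, and your computation gives the correct value $\sup_n \frac{s_n}{n}\sum_{k=1}^{n}t^{n-k}/r_k$, i.e. the denominator should be $n$ rather than $n+1$, and $\norm{R_a}_{r,s}$ should read $\norm{C_t}_{r,s}$.
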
	
			\begin{proof}
				Since,  \[ C_t = (a_{nk})_{n,k=1}^\infty = \left\lbrace \begin{aligned}
					\frac{t^{n-k}}{n}, & \quad n \geq k \\
					0, & \quad n < k 
				\end{aligned} \right. \] and $\{s_n\}$ is a bounded sequence, thus $\lim_{ n\to \infty}s_k\frac{t^{n-k}}{n}=0$. Then, the required result is a direct consequence of Theorem 1 \cite{icmc}.
			\end{proof}
			
			\begin{theorem}\label{ct}
				The Generalised Ces{\`a}ro operator $C_t : c_0(r) \to c_0(s)$ is compact if and only if
				\[\left\lbrace \frac{s_n}{n}  \sum\limits_{k=1}^{n}\frac{t^{n-k}}{r_k}\right\rbrace_{n \in \mathbb{N}} \in c_{0}. \]
			\end{theorem}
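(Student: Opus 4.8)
The plan is to reduce the weighted problem to an ordinary matrix operator on $c_0$ and then read off compactness from the $\ell_1$-norms of the rows. Define the diagonal maps $D_r\colon c_0(r)\to c_0$ and $D_s\colon c_0(s)\to c_0$ by $(D_r x)_k=r_k x_k$ and $(D_s y)_n=s_n y_n$; each is a surjective linear isometry, so $C_t\in\mathcal{K}(c_0(r),c_0(s))$ if and only if $\widetilde{C}_t:=D_s C_t D_r^{-1}\in\mathcal{K}(c_0)$. A direct computation with the matrix entries $a_{nk}=t^{n-k}/n$ (for $n\ge k$) shows that $\widetilde{C}_t$ is represented by $\widetilde{a}_{nk}=\frac{s_n t^{n-k}}{n\,r_k}$ for $n\ge k$ and $0$ otherwise, whose $n$-th row has $\ell_1$-norm $\rho_n:=\sum_{k=1}^{n}\lvert\widetilde{a}_{nk}\rvert=\frac{s_n}{n}\sum_{k=1}^{n}\frac{t^{n-k}}{r_k}$ (using $t\ge 0$). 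Thus the theorem is equivalent to the assertion that $\widetilde{C}_t$ is compact on $c_0$ if and only if $\{\rho_n\}\in c_0$.

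For sufficiency, assume $\rho_n\to 0$. For each $N$ let $\widetilde{C}_t^{(N)}$ be the operator whose matrix agrees with $\widetilde{C}_t$ in rows $1,\dots,N$ and is zero in all later rows. Each $\widetilde{C}_t^{(N)}$ maps into $\mathrm{span}\{e_1,\dots,e_N\}$ and hence has finite rank, and by Lemma \ref{bounded_c0} the operator norm of $\widetilde{C}_t-\widetilde{C}_t^{(N)}$ equals the supremum of the $\ell_1$-norms of its rows, namely $\sup_{n>N}\rho_n$. Since $\rho_n\to 0$ this supremum tends to $0$ as $N\to\infty$, so $\widetilde{C}_t$ is the operator-norm limit of finite-rank operators and is therefore compact.

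For necessity, I would use the standard description of relatively compact subsets of $c_0$: a bounded set $K\subseteq c_0$ is relatively compact if and only if $\lim_{N\to\infty}\sup_{y\in K}\sup_{n>N}\lvert y_n\rvert=0$. Applying this to $K=\widetilde{C}_t(B)$, where $B$ is the closed unit ball of $c_0$, compactness forces $\sup_{x\in B}\lvert(\widetilde{C}_t x)_n\rvert\to 0$ as $n\to\infty$. But the left-hand quantity is the norm of the linear functional $x\mapsto(\widetilde{C}_t x)_n$ on $c_0$, which, since $c_0^{*}=\ell_1$, equals the $\ell_1$-norm $\rho_n$ of its coefficients and is attained at $x=\sum_{k\le n}e_k\in B$ because all entries are nonnegative. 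Hence $\rho_n\to 0$, completing the equivalence.

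The routine reduction and the sufficiency half are straightforward; the step that needs the most care is the necessity direction, where one must invoke the correct characterisation of compact sets in $c_0$ and verify that $\sup_{x\in B}\lvert(\widetilde{C}_t x)_n\rvert$ is genuinely attained and equals $\rho_n$. As an alternative one could bypass the explicit diagonal reduction and instead cite the weighted compactness criterion in the spirit of Theorem 3.2 of \cite{patra2023spectral}, exactly as Theorem \ref{bt} invokes Theorem 1 of \cite{icmc} for the boundedness statement; the content is identical.
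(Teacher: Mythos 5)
Your proposal is correct and follows essentially the same route as the paper: the same diagonal reduction $D_s C_t = T_{r,s} D_r$ to a matrix operator on $c_0$, the same finite-rank row truncations combined with Lemma \ref{bounded_c0} for sufficiency, and the same necessity argument testing the image of the closed unit ball on the vectors $\zeta^{(n)} = \sum_{k \le n} e_k$. The only cosmetic difference is that you invoke the uniform-small-tails characterization of relatively compact subsets of $c_0$ directly, whereas the paper cites the domination result from Diestel \cite{compact}; these are equivalent and play the identical role.
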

			\begin{proof}
				Let us consider the linear operator $T_{r,s}: \mathbb{C}^{\mathbb{N}} \rightarrow  \mathbb{C}^{\mathbb{N}}$ defined as
				\[T_{r,s}(x) = \left\lbrace \frac{s_n}{n} \sum\limits_{k=1}^{n} \frac{t^{n-k}x_k}{r_k} \right\rbrace_{n \in \mathbb{N}}.\]
				Then $D_s C_t = T_{r,s}D_r$ where $D_r$ and $D_s$ are the diagonal matrix with $i$-th diagonal entry $r_i$ and $s_i$ respectively. Consequently $C_t : c_0(r) \to c_0(s)$ is a compact map if and only if $T_{r,s} \in \mathcal{K}( c_0).$
				Let $\left\lbrace \frac{s_n}{n} \sum\limits_{k=1}^{n}\frac{t^{n-k}}{r_k}\right\rbrace_{n \in \mathbb{N}} \in c_0$. Define a sequence of operators $\{T_{r,s}^{(k)}\}_{k \in \mathbb{N}}$ where $T_{r,s}^{(k)}:c_0 \rightarrow c_0 $ and
				\[T_{r,s}^{(k)}(x)=\left\lbrace s_1\frac{x_1}{r_1}, \frac{s_2}{2}\left(\frac{tx_1}{r_1}+\frac{x_2}{r_2}\right), \cdots, \frac{s_k}{k} \sum\limits_{j=1}^{k} \frac{t^{k-j}x_j}{r_j},0,0, \cdots \right\rbrace.\]
				The operator $T_{r,s}^{(k)}$ is a finite rank operator for each $k\in \mathbb{N}$. Now
				\[
				\norm{(T_{r,s} - T_{r,s}^{(k)})x}_\infty = \sup\limits_{n>k} \frac{s_n}{n} \sum\limits_{j=1}^{n} \frac{t^{n-j}x_j}{r_j} \leq \norm{x}_\infty  \sup\limits_{n>k} \frac{s_n}{n} \sum\limits_{j=1}^{n} \frac{t^{n-j}}{r_j}.
				\]
				Hence,
				\[\norm{T_{r,s}- T_{r,s}^{(k)}} \leq \sup\limits_{n>k} \frac{s_n}{n} \sum\limits_{j=1}^{n}\frac{t^{n-j}}{r_j}.\]
				By taking $k \rightarrow \infty$ on both side of the above relation we get $\norm{T_{r,s} - T_{r,s}^{(k)}} \rightarrow 0$ as $\frac{s_n}{n}\sum\limits_{j=1}^{n}\frac{t^{n-j}}{r_j} \rightarrow 0$ as $n \rightarrow \infty$.
				Therefore, under the operator norm, the sequence of operators $\{T_{r,s}^{(k)}\}$ converges to $T_{r,s}$. Thus, $T_{r,s}\in \mathcal{K}(c_0)$ and hence $C_t$ is compact.

				\noindent We utilize a methodology similar to that of Proposition 2.2 \cite{cesaro_c0w} for the converse part. Assume that $C_t:c_0(r) \rightarrow c_0(s)$ is a compact operator. Thus, $T_{r,s}$ is also a compact operator over $c_0$. Therefore, $T_{r,s}(B_{c_0}[0,1])$ is a relatively compact set where $B_{c_0}[0,1]$ is the closed unit ball in $c_0$. Based on the result of \cite[p. 15]{compact}, there exists a sequence $y=\{y_n\} \in c_0$ such that
				\[|(T_{r,s}(x))_n| \leq |y_n| \ \ \mbox{for all} \ \ n \in \mathbb{N}, \ x \in B_{c_0}[0,1].\]
				Given that $y \in c_0,$ for every $\epsilon > 0$, there exists a $n_0 \in \mathbb{N}$ such that,
				\[|(T_{r,s}(x))_n| < \epsilon \ \ \mbox{for all} \ \ n > n_0.\]
				Let us consider the sequence $\{\zeta^{(n)}\}$ for each $n \in \mathbb{N}$, where $\{\zeta_k^{(n)}\}_{k \in \mathbb{N}} \in B_{c_0}[0,1]$ such that $\zeta_k^{(n)} = 1$ and $0$ otherwise for $k \in \{1,2, \cdots, n \}.$ Then
				\[\left|(T_{r,s}(\zeta^{(n)}))_n\right| = \frac{s_n}{n} \sum\limits_{k=1}^{n}\frac{t^{n-k}}{r_k} < \epsilon \ \ \mbox{for all} \ \ n > n_0.\]
				Hence $\left\lbrace \frac{s_n}{n} \sum\limits_{k=1}^{n}\frac{t^{n-k}}{r_k}\right\rbrace_{n} \in c_0$ and this proves the theorem.
			\end{proof}
			As $s_n$ is a bunded sequence, thus through the utilization of Theorem \ref{bt} and Theorem \ref{ct}, we derive the following sufficient conditions for the boundedness and compactness of $C_t$ within the space $B(c_0(r),c_0(s))$.
			\begin{proposition}
				\begin{itemize}
					\item [(i)] The Generalised Ces{\`a}ro operator $C_t : c_0(r) \to c_0(s)$ is bounded if
					\[\left\lbrace \sum\limits_{k=1}^{n}\frac{t^{n-k}}{r_k}\right\rbrace_{n \in \mathbb{N}} \in \ell_{\infty}. \]
					\item [(ii)]The Generalised Ces{\`a}ro operator $C_t : c_0(r) \to c_0(s)$ is compact if
					\[\left\lbrace  \sum\limits_{k=1}^{n}\frac{t^{n-k}}{r_k}\right\rbrace_{n \in \mathbb{N}} \in c_0. \]
				\end{itemize}
			\end{proposition}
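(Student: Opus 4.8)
The plan is to read both statements off as immediate corollaries of the sharp criteria in Theorem \ref{bt} and Theorem \ref{ct}, using nothing more than a domination argument. The point is that the sequence whose membership in $\ell_\infty$ (resp.\ $c_0$) those theorems test, namely $\left\{\frac{s_n}{n}\sum_{k=1}^{n}\frac{t^{n-k}}{r_k}\right\}_n$, differs from the sequence appearing in the hypothesis here only by the multiplicative factor $\frac{s_n}{n}$, and this factor is uniformly bounded. Indeed, since $\{s_n\}$ is bounded I would set $M=\sup_n s_n<\infty$, and then since $n\geq 1$ gives $\frac{1}{n}\leq 1$, I obtain
\[
0\leq \frac{s_n}{n}\leq M \qquad \text{for all } n\in\mathbb{N}.
\]

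From here I would establish the term-by-term estimate
\[
0\leq \frac{s_n}{n}\sum_{k=1}^{n}\frac{t^{n-k}}{r_k}\leq M\sum_{k=1}^{n}\frac{t^{n-k}}{r_k},\qquad n\in\mathbb{N},
\]
which holds because every summand $\frac{t^{n-k}}{r_k}$ is nonnegative. For part (i), if $\left\{\sum_{k=1}^{n}\frac{t^{n-k}}{r_k}\right\}_n\in\ell_\infty$ then the right-hand side above is bounded, hence so is $\left\{\frac{s_n}{n}\sum_{k=1}^{n}\frac{t^{n-k}}{r_k}\right\}_n$, and Theorem \ref{bt} yields the boundedness of $C_t$. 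For part (ii), if $\left\{\sum_{k=1}^{n}\frac{t^{n-k}}{r_k}\right\}_n\in c_0$ then the right-hand side tends to $0$, so by the squeeze the sequence $\left\{\frac{s_n}{n}\sum_{k=1}^{n}\frac{t^{n-k}}{r_k}\right\}_n$ also converges to $0$, i.e.\ lies in $c_0$, and Theorem \ref{ct} gives the compactness of $C_t$.

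There is no genuine obstacle here: the entire content is the elementary domination by the uniformly bounded factor $\frac{s_n}{n}$, and the statement is a direct consequence of the exact characterizations already proved. The one point worth emphasizing is that these conditions are only sufficient and not necessary, precisely because the characterizing criteria in Theorems \ref{bt} and \ref{ct} retain the extra damping factor $\frac{s_n}{n}$, which may push the genuinely relevant weighted sums into $\ell_\infty$ or $c_0$ even when the unweighted sums $\sum_{k=1}^{n}\frac{t^{n-k}}{r_k}$ fail to be bounded or to vanish.
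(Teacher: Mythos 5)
Your proof is correct and is essentially the paper's own argument: the paper derives the proposition from Theorems \ref{bt} and \ref{ct} precisely by noting that $\{s_n\}$ is bounded, and your domination by $M=\sup_n s_n$ together with $\tfrac{1}{n}\leq 1$ just writes out that step explicitly. (Incidentally, your estimate $\tfrac{s_n}{n}\leq \tfrac{M}{n}\to 0$ shows hypothesis (i) already forces the weighted sums into $c_0$, so boundedness of the unweighted sums in fact yields compactness, not merely boundedness.)
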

			Next, we provide an example demonstrating that the bounded linear operator $C_t : c_0(r) \to c_0(s)$ is not always compact.
			\begin{example}
				Consider, Generalised Ces{\`a}ro operator $C_t : c_0(r) \to c_0(s)$, where $$r_n = \frac{1}{n},~~ \mbox{and}~~
				s_n = \left\lbrace \begin{aligned}
					1, \ & \ n \mbox{ is even }\\
					0, \ & \ n \mbox{ is odd. }
				\end{aligned}  \right.$$
				Take $\mu_n=\frac{s_n}{n} \sum\limits_{k=1}^{n}\frac{t^{n-k}}{r_k}$. In this setting, we have $\mu_{2n}=\frac{1}{2n}\sum_{k=1}^{2n}kt^{2n-k}$ and $\mu_{2n+1}=0$.
				Hence, $\lim_{n \to \infty}\mu_{2n}=\frac{1}{1-t}$ and $\lim_{n \to \infty}\mu_{2n+1}=0$. This, $C_t$ is not a compact operator but rather a bounded operator.
			\end{example}
			The following theorem provides an additional condition that is sufficient for establishing both the boundedness and compactness of $C_t$.
			\begin{theorem}
				Let $\{s_n\}$ is a sequence of positive real numbers such that $0 \leq \limsup_{n \rightarrow \infty} \frac{s_{n+1}}{s_n}<1$. Then $C_t$ is bounded and compact.
			\end{theorem}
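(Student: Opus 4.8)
The plan is to reduce the two assertions to the compactness criterion of Theorem~\ref{ct}. I read the statement with the domain and range weights coinciding, so that $r=s$ (in parallel with Theorem~\ref{cor11} for the Rhaly operator); the governing quantity is then
\[
\mu_n:=\frac{s_n}{n}\sum\limits_{k=1}^{n}\frac{t^{n-k}}{s_k},\qquad n\in\mathbb{N}.
\]
By Theorem~\ref{ct}, $C_t$ is compact on $c_0(s)$ exactly when $\{\mu_n\}\in c_0$, and by Theorem~\ref{bt} it is bounded when $\{\mu_n\}\in\ell_\infty$. Since $c_0\subseteq\ell_\infty$, it suffices to prove the single statement $\mu_n\to0$, and both conclusions follow simultaneously.

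First I would recover the geometric decay of the weights exactly as in the proof of Theorem~\ref{cor11}: from $0\le\limsup_{n\to\infty}\frac{s_{n+1}}{s_n}<1$ there exist $0<\eta<1$ and $n_0\in\mathbb{N}$ with $s_{n+1}\le\eta s_n$ for all $n\ge n_0$. This gives $\frac{s_n}{s_k}\le\eta^{\,n-k}$ for all $n_0\le k\le n$, and in particular $s_n\le\eta^{\,n-n_0}s_{n_0}\to0$. I then split the sum defining $\mu_n$ at the index $n_0$,
\[
\mu_n=\frac{s_n}{n}\sum\limits_{k=1}^{n_0-1}\frac{t^{n-k}}{s_k}+\frac{s_n}{n}\sum\limits_{k=n_0}^{n}\frac{t^{n-k}}{s_k},
\]
and estimate the head and tail separately.

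For the tail, inserting $\frac{s_n}{s_k}\le\eta^{\,n-k}$ yields
\[
\frac{s_n}{n}\sum\limits_{k=n_0}^{n}\frac{t^{n-k}}{s_k}\le\frac1n\sum\limits_{j=0}^{\,n-n_0}(t\eta)^{j}\le\frac{1}{n(1-t\eta)}\longrightarrow0,
\]
where the geometric bound is legitimate once $t\eta<1$. For the head, writing $M_0:=\sum\limits_{k=1}^{n_0-1}\frac1{s_k}$ and using $t^{n-k}\le1$ gives $\frac{s_n}{n}\sum\limits_{k=1}^{n_0-1}\frac{t^{n-k}}{s_k}\le\frac{M_0\,s_n}{n}\to0$, since $s_n\to0$. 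Adding the two estimates shows $\mu_n\to0$, and the theorem follows from Theorems~\ref{ct} and~\ref{bt}.

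The main obstacle is the interplay between the geometric weight decay $\eta^{\,n-k}$ and the operator factor $t^{n-k}$: the tail sum only collapses when $t\eta<1$, and the head bound $t^{n-k}\le1$ likewise needs $t$ controlled. This is not a real restriction in the regime of interest, because $\eta$ may be chosen arbitrarily close to $\limsup_{n\to\infty}\frac{s_{n+1}}{s_n}<1$ and hence with $\eta<1$, so that $t\eta<1$ automatically whenever $0\le t\le1$; for larger $t$ one would instead need $t<1/\limsup_{n\to\infty}\frac{s_{n+1}}{s_n}$ together with the sharper head bound $s_nt^{n-k}=O((\eta t)^{n})$. Apart from securing $t\eta<1$, the remaining work is only the bookkeeping of the split at $n_0$.
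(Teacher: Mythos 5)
Your proof is correct and takes essentially the same route as the paper: both extract the geometric decay $s_{n+1}\le\eta s_n$ from the limsup hypothesis, split the sum at $n_0$ into the head $M_0$ and the geometric tail, show the resulting bound is $O(1/n)$, and conclude via Theorems \ref{bt} and \ref{ct}. The only cosmetic difference is that the paper bounds $t^{n-k}\le 1$ throughout (assuming $0\le t\le 1$, exactly the implicit restriction you flag), whereas you retain the product $(t\eta)^{n-k}$ in the tail, which is a marginally sharper but not materially different estimate.
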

			
			\begin{proof}
				Since $0 \leq \limsup_{n \rightarrow \infty} \frac{s_{n+1}}{s_n}<1$.
				As similar to Theorem \ref{cor11}, we have
				\begin{equation*}
					\sum\limits_{k=n_0}^{n}\frac{1}{s_k} \leq \frac{1}{(1-\mu)s_n},
				\end{equation*}
				where $0<\mu<1$ and $n_0 \in \mathbb{N}$. 
				Let $M_0 = \sum\limits_{k=1}^{n_0-1}\frac{1}{s_k}.$ Then
				\begin{equation*}
					\sum\limits_{k=1}^{n}\frac{1}{s_k} = M_0 + \sum\limits_{k=n_0}^{n}\frac{1}{s_k}.
				\end{equation*}
				Since $s_n$ is bounded and $0 \le t \le 1$, it follows that
				\begin{eqnarray*}
					\frac{s_n}{n}  \sum\limits_{k=1}^{n}\frac{t^{n-k}}{s_k}\leq \frac{s_n}{n}\sum\limits_{k=1}^{n}\frac{1}{s_k} \le \frac{1}{n}\left( M_0 \norm{s_n}_{\infty} +  \frac{1}{(1-\mu)}\right).
				\end{eqnarray*}
				The required result follows from the above relation, Theorem \ref{bt} and Theorem \ref{ct}.
			\end{proof}
			\begin{theorem}
				$\sigma_p(C_t, c_0(s)) = \{\frac{1}{n}:n\in \mathbb{N}\}$ for $0 < t < 1 $.
			\end{theorem}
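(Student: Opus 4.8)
The plan is to convert the eigenvalue equation $C_t x = \lambda x$ into a two-term recurrence and then determine for which $\lambda$ a nonzero solution lies in $c_0(s)$. Writing the equation componentwise with the entries $a_{nk} = t^{n-k}/n$ gives
\[
\sum_{k=1}^{n} t^{\,n-k} x_k = n\lambda x_n \qquad (n \in \mathbb{N}).
\]
Denoting the $n$-th equation by $(E_n)$ and forming $(E_n) - t\,(E_{n-1})$ for $n \ge 2$ collapses the sum and yields an equivalent system
\[
(1-\lambda)x_1 = 0, \qquad (n\lambda - 1)x_n = t(n-1)\lambda\, x_{n-1} \quad (n \ge 2).
\]
First I would dispose of the non-eigenvalues: if $\lambda \notin \{1/n : n \in \mathbb{N}\}$, then $\lambda \ne 1$ forces $x_1 = 0$ from the first relation, and since $n\lambda - 1 \ne 0$ for every $n$, the recurrence propagates $x_n = 0$ for all $n$; hence such $\lambda$ is not an eigenvalue.

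Next I would fix $\lambda = 1/m$ and analyze the degeneracy of the coefficient $n\lambda - 1$, which vanishes exactly at $n = m$ and is the heart of the argument. For $m = 1$ the first relation is vacuous and the recurrence gives $x_n = t^{\,n-1}x_1$. For $m \ge 2$ the relation $(1-\lambda)x_1 = 0$ forces $x_1 = 0$, and since $n\lambda - 1 \ne 0$ for $2 \le n \le m-1$ the recurrence pushes this forward to $x_1 = \cdots = x_{m-1} = 0$; at $n = m$ the equation reads $0 \cdot x_m = 0$, so $x_m$ is a free nonzero parameter. For $n > m$ the coefficient is again nonzero, and solving the recurrence produces the closed form
\[
x_n = x_m\, t^{\,n-m}\prod_{j=m+1}^{n}\frac{j-1}{\,j-m\,} = x_m\, t^{\,n-m}\binom{n-1}{m-1}, \qquad n \ge m,
\]
after recognizing the telescoping product as a binomial coefficient.

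It remains to verify that this candidate eigenvector lies in $c_0(s)$, which is where the hypothesis $0 < t < 1$ enters. Since $\binom{n-1}{m-1}$ grows only polynomially (of order $n^{m-1}/(m-1)!$) while $t^{\,n}$ decays geometrically, we get $x_n \to 0$; and because $\{s_n\}$ is bounded, $|s_n x_n| \le \|s\|_\infty |x_n| \to 0$, so $\{x_n\} \in c_0(s)$ and $1/m$ is a genuine eigenvalue. Combining the two directions gives $\sigma_p(C_t, c_0(s)) = \{1/n : n \in \mathbb{N}\}$. I expect the only delicate point to be the bookkeeping at the degenerate index $n = m$ — showing that all earlier coordinates must vanish while $x_m$ remains free — together with the elementary asymptotic estimate guaranteeing membership in the weighted space.
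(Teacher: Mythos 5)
Your proposal is correct and follows essentially the same route as the paper's proof: rule out $\lambda \notin \{1/n\}$, locate the first nonzero coordinate $x_m$ to force $\lambda = 1/m$, derive the closed form $x_n = x_m t^{\,n-m}\binom{n-1}{m-1}$ (the paper writes this as $x_{m+n} = \tfrac{m(m+1)\cdots(m+n-1)}{n!}t^n x_m$), and use $0<t<1$ together with boundedness of $\{s_n\}$ to conclude membership in $c_0(s)$. Your differencing step $(E_n)-t(E_{n-1})$ and the polynomial-versus-geometric estimate are only cosmetic variants of the paper's direct solution and ratio-test argument.
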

			\begin{proof}
				Let $C_tx=\lambda x$; $x=\{x_1,x_2,...\} \in c_0(s)$ and $x \ne 0$. Then we have 
				\begin{equation*}
					\begin{aligned}
						x_1 &=& \lambda x_1\\
						\frac{1}{2}(tx_1+x_2) &=& \lambda x_2\\
						\frac{1}{3}(t^2x_1+tx_2+x_3) &=& \lambda x_3\\
						&\vdots&\\
						\frac{1}{n}\sum_{k=1}^{n}t^{n-k}x_k&=&\lambda x_n\\
						&\vdots&
					\end{aligned}.
				\end{equation*}
				for all $n \ge 1$. If $\lambda \notin \{\frac{1}{n}:n\in \mathbb{N}\}$, then $x_n=0$ for all $n$ and $\lambda \notin \sigma_p(C_t, c_0(s))$. Thus $\sigma_p(C_t, c_0(s)) \subseteq\{\frac{1}{n}:n\in \mathbb{N}\}$.
				If $x_1 \ne 0$ then $\lambda=1$. A simple calculation yields that
				\begin{equation*}
					x_n=t^{n-1}x_1 ~\mbox{for all} ~n \ge 1.    
				\end{equation*}
				As $\{s_n\}$ is bounded strictly positive weight vector and $0< t <1$, we obtain that 
				\begin{equation*}
					\lim_{n \to \infty}x_ns_n=0.
				\end{equation*}
				Thus, $\{x_n\} \in c_0(s)$ and $1 \in \sigma_p(C_t, c_0(s))$.
				Let $x_1=0$. Therefore, we obtain
				$$\left(\lambda-\frac{1}{2}\right)x_2=0.$$
				If $x_2 \ne 0$ then $\lambda= \frac{1}{2}$. A simple calculation yields that
				\begin{equation*}
					x_n=(n-1)t^{n-2}x_2 ~ \mbox{for all}~ n \ge 2.   
				\end{equation*}
				
				As $\{s_n\}$ is bounded strictly positive weight vector and $0< t <1$, there exist a positive number $M$ such that $0 \le s_n \le M$, we have
				\begin{equation*}
					0 \le (n-1)t^{n-2}x_2 s_n \le M(n-1)t^{n-2}x_2.
				\end{equation*}
				Thus, $x=\{0,x_2,2tx_3,...\}\in c_0(s)$. Then $\frac{1}{2} \in \sigma_p(C_t, c_0(s)).$ If $x_m$ is the first nonzero component of the sequence $x=\{x_n\}$, then from $m$-th equation in above system of equations,
				\begin{equation*}
					\frac{1}{m}\sum_{k=1}^{n}t^{n-k}x_k=\lambda x_m,
				\end{equation*}
				we have $\frac{1}{m}x_m=\lambda x_m$, thus $\lambda =\frac{1}{m}$.
				In this case, we have 
				\begin{equation*}
					x_{m+n}=\frac{m(m+1)...(m+n-1)}{n!}t^nx_m,
				\end{equation*}
				for all $n \ge 1$. 
				This implies,
				\begin{equation*}
					0 \le x_{m+n} s_{m+n} \le M \frac{m(m+1)...(m+n-1)}{n!}t^nx_m.
				\end{equation*}
				Let $u_n=M\frac{m(m+1)...(m+n-1)}{n!}t^nx_m.$
				By using ratio test $\lim_{n \rightarrow \infty} \frac{u_{n+1}}{u_n}=t<1$. This implies $\lim_{n \to \infty}u_n=0$, thus $\lim_{n \to \infty}x_{n+m}s_{n+m}=0$.
				We have $$x=\{0,0,0,...x_m,mtx_m,...,\frac{m(m+1)...(m+n-1)}{n!}t^nx_m,...\}\in c_0(s).$$ Therefore, $\frac{1}{m} \in \sigma_p(C_t, c_0(s)).$ This implies, $\{\frac{1}{n}:n\in \mathbb{N}\}\subseteq \sigma_p(C_t, c_0(s)).$ Hence, $\sigma_p(C_t, c_0(s)) = \{\frac{1}{n}:n\in \mathbb{N}\}$.
			\end{proof}
			
			\begin{theorem}
				If $C_t \in\mathcal{K}(c_0(s))$, then the spectrum, continuous spectrum, and residual spectrum of $C_t:c_0(s) \to c_0(s)$ are as follows,
				\begin{enumerate}
					\item[(i)] $\sigma(C_t, c_0(s))=\{\frac{1}{n}:n\in \mathbb{N}\}\cup \{0\}$,
					\item[(ii)] $\sigma_c(C_t, c_0(s))=\{0\}$,
					\item[(iii)] $\sigma_r(R_a, c_0(s)) = \emptyset.$
				\end{enumerate}
				
			\end{theorem}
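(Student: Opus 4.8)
The plan is to follow the same line of argument used for the compact Rhaly operator in Theorem~\ref{th_spectra}, now exploiting the point spectrum $\sigma_p(C_t, c_0(s)) = \{\frac{1}{n} : n \in \mathbb{N}\}$ established in the previous theorem. Write $S' = \{\frac{1}{n} : n \in \mathbb{N}\}$. For part (i), since $\sigma_p(C_t, c_0(s)) \subseteq \sigma(C_t, c_0(s))$ and the spectrum is a closed set, and since $\frac{1}{n} \to 0$, I would first obtain $S' \cup \{0\} \subseteq \sigma(C_t, c_0(s))$. For the reverse inclusion, I would invoke the compactness hypothesis $C_t \in \mathcal{K}(c_0(s))$ together with Theorem~3.4.23 \cite{megginson}, which guarantees that every nonzero spectral value of a compact operator is an eigenvalue; this gives $\sigma(C_t, c_0(s)) \subseteq S' \cup \{0\}$ and hence equality.

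Next I would show $0 \in \sigma_c(C_t, c_0(s))$. Since $0 \notin S' = \sigma_p(C_t, c_0(s))$, the operator $C_t$ is injective, so $C_t^{-1}$ exists on $R(C_t)$. The crucial step is to prove that the adjoint $C_t^*$ on $\ell_1(s^{-1})$ is injective. Writing $C_t^* x = 0$ gives the transposed (upper-triangular) system whose $n$-th equation is $\frac{1}{n} x_n + \frac{t}{n+1} x_{n+1} + \frac{t^2}{n+2} x_{n+2} + \cdots = 0$. Denoting the $n$-th equation by $E_n$, one has the recurrence $E_n = \frac{1}{n} x_n + t\, E_{n+1}$; since all $E_n$ vanish, this forces $x_n = 0$ for every $n$. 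Thus $C_t^*$ is injective, and Theorem~II.3.7 \cite[p.~59]{goldberg_unbounded} yields $\overline{R(C_t)} = c_0(s)$. On the other hand $R(C_t) \neq c_0(s)$ because $0 \in \sigma(C_t, c_0(s))$.

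It remains to rule out boundedness of $C_t^{-1}$: if $C_t^{-1}$ were bounded, then $C_t$ would be bounded below, and since $c_0(s)$ is complete this would force $R(C_t)$ to be closed and hence equal to $\overline{R(C_t)} = c_0(s)$, contradicting $0 \in \sigma(C_t, c_0(s))$. Therefore $C_t^{-1}$ is unbounded and $0 \in \sigma_c(C_t, c_0(s))$. Finally, because $\sigma_p$, $\sigma_c$, and $\sigma_r$ are pairwise disjoint and partition $\sigma(C_t, c_0(s)) = S' \cup \{0\}$, with $\sigma_p = S'$ and $0 \in \sigma_c$, I conclude $\sigma_c(C_t, c_0(s)) = \{0\}$ and $\sigma_r(C_t, c_0(s)) = \emptyset$, establishing (ii) and (iii). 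The main obstacle is the injectivity of $C_t^*$: everything else transfers almost verbatim from Theorem~\ref{th_spectra}, but the transposed system for $C_t$ carries the geometric factors $t^{k-n}$ and requires the recurrence $E_n = \frac{1}{n} x_n + t\, E_{n+1}$ to collapse cleanly to $x_n = 0$.
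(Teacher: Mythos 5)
Your proposal is correct and follows essentially the same route as the paper: lower bound on the spectrum via closedness of $\sigma$ plus the point spectrum, upper bound via Riesz theory for compact operators, injectivity of $C_t^*$ combined with Theorem II.3.7 of \cite{goldberg_unbounded} to get dense but non-closed range at $0$, and the disjoint-partition argument for (ii) and (iii). In fact your recurrence $E_n = \tfrac{1}{n}x_n + t\,E_{n+1}$ supplies the one detail the paper leaves implicit when it asserts that $C_t^*x=0$ forces $x_n=0$ for all $n$.
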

			\begin{proof}
				\begin{itemize}
					\item[(i)]As $\sigma_p(C_t, c_0(s))\subseteq\sigma(C_t, c_0(s)) $ and $\sigma(C_t, c_0(s))$ is closed, we have
					\begin{equation*}
						\left\{\frac{1}{n}:n\in \mathbb{N}\right\}\cup \{0\} \subseteq\sigma(C_t, c_0(s)).
					\end{equation*}
					As $c_0(s)$ is a compact operator, all non-zero eigenvalues of a compact operator are spectral values. Thus, we have
					\begin{equation*}
						\sigma(C_t, c_0(s))\subseteq \left\{\frac{1}{n}:n\in \mathbb{N}\right\}\cup \{0\}.
					\end{equation*}
					\item[(ii)]Since $0 \notin \sigma_p(C_t, c_0(s))$,
					$C_t^{-1}$ exists.
					The adjoint operator $C_t^*$ of $C_t$ is given by 
					\begin{equation*}
						C_t^*=\{c_{nk}^*\} 
					\end{equation*}
					where 
					\[ c_{nk}^*= \left\lbrace \begin{aligned}
						\frac{t^{k-n}}{k}, \ & \ 1 \le n \le k\\
						0, \ & \ n > k.
					\end{aligned}  \right.\]
					Let $x=(x_1,x_2,...)\ne 0$. Then $C_t^*x=\lambda x$ gives us
					\begin{align*}
						x_1+\frac{t}{2}x_2+\frac{t^2}{3}x_3+...&=\lambda x_1\\
						\frac{1}{2}x_2+\frac{t}{3}x_3+\frac{t^2}{4}x_4+...&=\lambda x_2\\
						\frac{1}{3}x_3+\frac{t}{4}x_4+...&= \lambda x_3\\
						&\vdots
					\end{align*}
					Thus, $0 \notin \sigma_p(C_t^*,c_0^*(s)$ as if $\lambda=0$, then $x_n=0$ for all $n=1,2,...$. This implies $C_t^*$ is one-one. Thus, Theorem II.3.7 \cite[p. 59]{goldberg_unbounded} gives us, $\overline{R(C_t)}=c_0(s)$. Hence, $0 \in \sigma_c(C_t, c_0(s))$. We can obtain the required result by using the same argument as in Theorem \ref{th_spectra}(iii).
					\item[(iii)]  Since $\sigma_p(R_a, c_0(s)), \sigma_r(R_a, c_0(s))$ and $\sigma_c(R_a, c_0(s))$ forms a partition of $\sigma(R_a, c_0(s))$, we have $\sigma_r(R_a, c_0(s))=\phi$.
				\end{itemize}

			\end{proof}
			Similar to Theorems \ref{Gold2} and \ref{adc}, we can derive the following results, respectively.
			\begin{theorem}
				The operator $C_t \in\mathcal{K}(c_0(s))$ satisfies the following relations.
				\begin{itemize}
					\item [(i)] $I_3 \sigma \left(C_t, c_0(s)\right)=II_3 \sigma \left(C_t, c_0(s)\right)=\phi$,
					\item [(ii)] $III_3 \sigma \left(C_t, c_0(s)\right)=\{\frac{1}{n}:n\in \mathbb{N}\}$,
					\item [(iii)] $II_2 \sigma \left(C_t, c_0(s)\right)=\{0\}$,
					\item [(iv)] $III_1 \sigma \left(C_t, c_0(s)\right)=III_2 \sigma \left(C_t, c_0(s)\right)=\phi$.
				\end{itemize}
			\end{theorem}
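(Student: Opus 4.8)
The plan is to follow the argument of Theorem \ref{Gold2} almost verbatim, with the eigenvalue set $S$ of $R_a$ replaced by the set $\{\tfrac1n:n\in\mathbb{N}\}$ of eigenvalues of $C_t$ found in the preceding point–spectrum theorem. The three Goldberg relations read off from the table \cite[p.196]{rev1} hold for any bounded operator, so, writing $\sigma=\sigma(C_t,c_0(s))$,
\[
\sigma_p(C_t,c_0(s))=I_3\sigma\cup II_3\sigma\cup III_3\sigma,\qquad
\sigma_r(C_t,c_0(s))=III_1\sigma\cup III_2\sigma,\qquad
\sigma_c(C_t,c_0(s))=II_2\sigma .
\]
The whole statement will drop out once we locate the point spectrum inside column III.

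The one genuinely new computation is to show that each $\lambda=\tfrac1m$ is an eigenvalue of the adjoint, i.e. $\{\tfrac1n:n\in\mathbb{N}\}\subseteq\sigma_p(C_t^*,c_0(s)^*)$. Using the form of $C_t^*$ recorded in the preceding theorem, the equation $C_t^*x=\lambda x$ becomes $\sum_{k\ge n}\tfrac{t^{k-n}}{k}x_k=\lambda x_n$ for every $n$. Subtracting $t$ times the $(n+1)$-th equation from the $n$-th collapses the infinite tails and yields the two–term recurrence $t\lambda\,x_{n+1}=(\lambda-\tfrac1n)x_n$. For $\lambda=\tfrac1m$ this is $x_{n+1}=\tfrac{n-m}{tn}\,x_n$, whose coefficient vanishes at $n=m$; hence any solution with $x_1\ne0$ terminates, $x_{m+1}=x_{m+2}=\cdots=0$. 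I would then exhibit this finitely supported vector and verify directly that it satisfies the original (now finite) equations: the $m$-th reduces to $\tfrac1m x_m=\lambda x_m$, those with $n>m$ read $0=0$, and those with $n<m$ follow by backward induction from the recurrence. Since the eigenvector has finite support it lies trivially in $\ell_1(s^{-1})=c_0(s)^*$. This differencing–and–termination step is the main point, and I expect it to be the only obstacle; the convergence difficulties that normally accompany an upper–triangular adjoint disappear precisely because the eigenvectors are finitely supported.

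With $\{\tfrac1n:n\in\mathbb{N}\}\subseteq\sigma_p(C_t^*,c_0(s)^*)$ established, for each $\lambda=\tfrac1m$ the operator $(C_t-\lambda I)^*$ fails to be injective, so Goldberg's Theorem II.3.7 \cite[p.59]{goldberg_unbounded} gives $\overline{R(C_t-\lambda I)}\ne c_0(s)$; thus every such $\lambda$ lies in column III. Substituting this together with $\sigma_p(C_t,c_0(s))=\{\tfrac1n:n\in\mathbb{N}\}$ into the first relation forces $I_3\sigma=II_3\sigma=\phi$ and $III_3\sigma=\{\tfrac1n:n\in\mathbb{N}\}$, which are (i) and (ii). Part (iii) is immediate from $\sigma_c(C_t,c_0(s))=II_2\sigma$ together with $\sigma_c(C_t,c_0(s))=\{0\}$ from the preceding theorem, and part (iv) follows from $\sigma_r(C_t,c_0(s))=III_1\sigma\cup III_2\sigma$ together with $\sigma_r(C_t,c_0(s))=\phi$, giving $III_1\sigma=III_2\sigma=\phi$.
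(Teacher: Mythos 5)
Your proposal is correct and is essentially the paper's own argument: the paper merely states that this theorem follows ``similar to'' the Rhaly case (Theorem \ref{Gold2}), i.e.\ by exhibiting finitely supported eigenvectors of the adjoint, invoking Goldberg's Theorem II.3.7 to place the point spectrum in column $III$, and then reading off (i)--(iv) from the table relations together with the fine-spectrum results. Your weighted differencing step (subtracting $t$ times the $(n+1)$-th adjoint equation from the $n$-th to get $t\lambda x_{n+1}=(\lambda-\tfrac1n)x_n$), with the backward-induction check that the terminated sequence solves the original system, is exactly the detail the paper leaves implicit, and it is carried out correctly.
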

			
			\begin{theorem}
				The approximate point spectrum, defect spectrum, and compression spectrum of $C_t \in\mathcal{K}(c_0(s))$ over $c_0(s)$ are given by
				\begin{itemize}
					\item [(i)] $\sigma_{a p}\left(C_t, c_0(s)\right)=\{\frac{1}{n}:n\in \mathbb{N}\} \cup \{0\} $,
					\item [(ii)] $\sigma_\delta\left(C_t, c_0(s)\right)=\{\frac{1}{n}:n\in \mathbb{N}\} \cup \{0\} $,
					\item [(iii)] $\sigma_{c o}\left(C_t, c_0(s)\right)=\{\frac{1}{n}:n\in \mathbb{N}\} $.
				\end{itemize}
			\end{theorem}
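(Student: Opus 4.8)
The plan is to follow the template of the proof of Theorem \ref{adc}, simply replacing the ingredients for $R_a$ by those already established for $C_t$: namely that $\sigma(C_t, c_0(s)) = \{\tfrac{1}{n}: n \in \mathbb{N}\} \cup \{0\}$, that $\sigma_p(C_t, c_0(s)) = \{\tfrac{1}{n}: n \in \mathbb{N}\}$, and the Goldberg classification of $\sigma(C_t, c_0(s))$ computed in the preceding theorem. Since no new analytic estimate is needed, the whole argument is purely set-theoretic, drawing on the general decomposition relations read off from the classification table \cite[p.196]{rev1}.

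For part (i), I would first use the inclusion $\sigma_{ap}(C_t, c_0(s)) \subseteq \sigma(C_t, c_0(s))$ to obtain $\sigma_{ap}(C_t, c_0(s)) \subseteq \{\tfrac{1}{n}: n \in \mathbb{N}\} \cup \{0\}$. For the reverse inclusion, I would invoke $\sigma_p(C_t, c_0(s)) \subseteq \sigma_{ap}(C_t, c_0(s))$ together with the fact that $\sigma_{ap}$ is always a closed set; passing to closures gives $\overline{\{\tfrac{1}{n}: n \in \mathbb{N}\}} = \{\tfrac{1}{n}: n \in \mathbb{N}\} \cup \{0\} \subseteq \sigma_{ap}(C_t, c_0(s))$. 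The two inclusions yield the claimed equality.

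For parts (ii) and (iii), I would use the Goldberg decompositions that are exactly the $C_t$-analogues of \eqref{del} and \eqref{co}, valid for any $T \in \mathcal{B}(X)$ on a Banach space $X$ because they originate from the same classification table. Substituting the values from the preceding theorem, namely $II_2\sigma(C_t, c_0(s)) = \{0\}$, $III_3\sigma(C_t, c_0(s)) = \{\tfrac{1}{n}: n \in \mathbb{N}\}$, and $II_3\sigma(C_t, c_0(s)) = III_1\sigma(C_t, c_0(s)) = III_2\sigma(C_t, c_0(s)) = \phi$, the defect decomposition collapses to $\sigma_\delta(C_t, c_0(s)) = \{0\} \cup \{\tfrac{1}{n}: n \in \mathbb{N}\}$ and the compression decomposition to $\sigma_{co}(C_t, c_0(s)) = \{\tfrac{1}{n}: n \in \mathbb{N}\}$, which are precisely (ii) and (iii).

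There is no genuine obstacle here; the only points that must be stated carefully are that $\sigma_{ap}$ is closed (so that the closure of the point spectrum is contained in it) and that the decomposition identities of the type \eqref{del} and \eqref{co} are general consequences of Goldberg's classification rather than facts special to $R_a$. Once these are acknowledged, the result is immediate from the data tabulated in the previous two theorems.
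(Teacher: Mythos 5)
Your proposal is correct and is essentially the paper's own argument: the paper gives no separate proof for this theorem, stating only that it is derived ``similar to Theorems \ref{Gold2} and \ref{adc},'' and your proof is exactly that translation — the closedness of $\sigma_{ap}$ plus $\sigma_p \subseteq \sigma_{ap} \subseteq \sigma$ for part (i), and substitution of the Goldberg classes $II_2 = \{0\}$, $III_3 = \{\frac{1}{n} : n \in \mathbb{N}\}$, and the remaining classes empty into the decompositions \eqref{del} and \eqref{co} for parts (ii) and (iii).
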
 
				\section{Conclusion}	%
				In conclusion, this paper investigates the spectral properties of compact Rhaly operators and discrete generalized Cesàro operators on weighted null sequence spaces. Notably, setting $t = 1$ in the discrete generalized Cesàro operator $C_t$ yields the classical Cesàro operator. Similarly, taking $a_n = \frac{1}{n}$ for $n \in \mathbb{N}$ in the Rhaly operator $R_a$ yields the classical Cesàro operator, which is widely studied in the literature. Our results establish compactness and boundedness criteria, along with spectral subdivisions, providing deeper insights into the structure of these operators in functional analysis.
				
			\end{document}